%
%
%
%
\documentclass[12pt,reqno]{amsart}
\usepackage[margin=2.5cm
]{geometry}
\usepackage{thmtools}
\usepackage{thm-restate}

\usepackage{calc}
\usepackage{mathtools}
\usepackage{amsaddr}

\makeatletter
\def\paragraph{\@startsection{paragraph}{4}%
  \z@\z@{-\fontdimen2\font}%
  {\normalfont\bfseries}}
\makeatother

\usepackage{graphicx}
\setcounter{secnumdepth}{3}
\setcounter{tocdepth}{3}

\usepackage{enumerate}

\usepackage{amssymb, amsmath, amsthm}

\usepackage[hyperfootnotes=false, colorlinks, linkcolor={blue}, citecolor={magenta}, filecolor={blue}, urlcolor={blue}]{hyperref}

\usepackage{cleveref}

\overfullrule = 0.1cm

\allowdisplaybreaks[1]
\usepackage{nicefrac}

\newcommand{\ds}[1]{\displaystyle{#1}}

\newcommand{\RN}[1]{%
  \textup{\uppercase\expandafter{\romannumeral#1}}%
}

\newcommand*{\rom}[1]{\expandafter\romannumeral #1}


\allowdisplaybreaks

\newcommand{\p}{\mathfrak{p}}

\newcommand{\GammaON}{{\rm \Gamma_0}({\operatorname{N}})}

\newtheorem{thm}{Theorem}[section]
\newtheorem{lem}[thm]{Lemma}

\newtheorem{remark}[thm]{Remark}

\newcommand{\Eq}{}
\newcommand{\RR}{\mathbb{R}}      
\newcommand{\ZZ}{\mathbb{Z}}      

\newcommand{\mat}[4]{\left[\begin{smallmatrix*}
                #1 & #2 \\
                #3 & #4 \\
        \end{smallmatrix*}\right]}

\newcommand{\bmat}[4]{\left[\begin{matrix*}[r]
		#1 & #2 \\
		#3 & #4 \\
	\end{matrix*}\right]}

\newcommand{\para}{\mathrm{K}}

\newcommand{\Q}{\mathbb{Q}}

\newcommand{\Z}{\mathbb{Z}}

\newcommand{\mc}[1]{\mathcal{#1}}

\arraycolsep=3pt

\newcommand{\GSp}{\operatorname{GSp}}
\newcommand{\Sp}{\operatorname{Sp}}
\newcommand{\Si}[1]{{\rm Si}(p^{#1})}

 



\def\AA{{\mathbb A}}

\def\CC{{\mathbb C}}

\def\HH{{\mathbb H}}

\def\NN{{\mathbb N}}

\def\QQ{{\mathbb Q}}
\def\RR{{\mathbb R}}

\def\ZZ{{\mathbb Z}}

\def\ff{{\frak f}}


\DeclareMathOperator{\Kfun}{K}
\DeclareMathOperator{\Lfun}{L}

\DeclareMathOperator{\Sifun}{Si}

\newcommand{\Poincare}{Poincar\'e}

\DeclareMathOperator{\Sym}{Sym}


\def\<{\langle}
\def\>{\rangle}







\DeclareMathOperator{\GL}{GL}

\DeclareMathOperator{\SL}{SL}

\DeclareMathOperator{\K2}{K_p^2}



\newcommand{\invtr}{^{\text{-}\!\top}}

\newtheorem*{rmk*}{Remark}

\numberwithin{equation}{section}



\title {Pullback of Klingen Eisenstein series and certain critical L-values identities}
\author{Alok Shukla}
\address{University of Manitoba, Canada.}
\email{sajal.eee@gmail.com}

\curraddr{Alok Shukla \\
	Department of Mathematics\\
University of Manitoba\\
Winnipeg, Canada.}

\date{Aug 5, 2019.}
\thanks{\textit{Acknowledgements.} Author is grateful to Siddarth Sankaran for many helpful discussions. Author also thanks Ameya Pitale and Ralf Schmidt for suggesting this problem and for their helpful suggestions at the beginning of the project.}

\subjclass[2000]{Primary 11F46, 11F67}

\dedicatory{}

\keywords{Pullback of Klingen Eisenstein series, Critical L-values.}

\begin{document}
	
	\begin{abstract}
		We obtain pullback formulas for Klingen Eisenstein series with arbitrary levels, with respect to both Siegel congruence and paramodular subgroups, in degree two. Pullback results are used, along with the Fourier series expansion of Klingen Eisenstein series given by Mizumoto, to prove certain identities involving critical values of $ \Lfun $-functions attached to normalized elliptic modular forms of weight $ k $ and full level. 
	\end{abstract}
	
	\maketitle


\section{Introduction}
The idea of pullback of Eisenstein series is due to Garrett \cite{garrett1984pullbacks} who discovered it in the early 1980s. Thereafter, several authors have considered pullbacks in various settings [\cite{MR1166221}, \cite{heim1999pullbacks}, \cite{MR920329}, \cite{MR2380319}, \cite{MR2652263}, \cite{MR2806519}], and obtained a number of results related to the arithmetic and analytic properties of $ L $-functions. 

The aim of this paper is to obtain a pullback formula for Klingen Eisenstein series of an arbitrary level in degree two. We note that in \cite{heim1999pullbacks} pullbacks of Klingen Eisenstein series of degree $ n $ are considered. In contrast, here we restrict to  the degree two case, but we obtain pullback results for Klingen Eisenstein series of arbitrary levels. Indeed, for the full level case, i.e.~for $ N=1 $, our formula reduces to the degree two ($ n=1 $) case of the pullback formula of Heim in Theorem $ 2.3 $ in \cite{heim1999pullbacks}, and these results are in agreement. Our method is representation theoretic, whereas in \cite{heim1999pullbacks} the treatment is classical. Our representation theoretic methods allow us to consider pullbacks of Klingen Eisenstein series of level $ N $ with respect to both the Siegel congruence subgroup $ \Gamma_0(N) $ and the paramodular subgroup $ \Kfun(N) $ (see \eqref{Eq:defGammaN} and \eqref{KNdefeq} for definitions of $ \Gamma_0(N) $ and  $ \Kfun(N) $, respectively).
As a corollary of the pullback formula for $ N=1 $, and using the Fourier series expansion of Klingen Eisenstein series calculated by Mizumoto \cite{MR733590}, we prove certain identities containing critical values of $ \Lfun $-functions attached to normalized elliptic modular forms of weight $ k $ and full level.

\subsection{Klingen Eisentein series}
We recall some basic facts related to Klingen Eisenstein series.
\subsubsection{Classical formulation}
Klingen Eisenstein series are examples of holomorphic Siegel modular forms and were defined by Klingen in \cite{MR0219473}. A Klingen Eisenstein series $E_{n,r}^k(Z,f)\in M_k^{(n)}$ is obtained by lifting a cusp form $f\in S_k^{(r)}$ for $0\leq r<n$, using a natural summation.  We note that $ E_{n,r}^k(Z,f) $ is convergent if $k>n+r+1$. Several results related to Fourier coefficients and Hecke eigenvalues of Klingen Eisenstein series are known \cite{Bocherer1982, Bocherer1983, MR1087121, MR605300, MR636883, MR733590}. In this paper, we concentrate on the case $n=2$ and $r=1$, and investigate the pullback of Klingen Eisenstein series.

Let $ f $ be an elliptic cusp form of level $ N $, weight $ k $, i.e., $ f \in S_k^{(1)}(\Gamma_0^2(N)) $. A  Klingen Eisenstein series of level $ N $ with respect to $ \Gamma_0^4(N) $, is defined by
\begin{align}{\label{Klingendef_level}}
E_{2,1}^k(Z,f,N) &:= \sum_{\gamma \in \left({Q}(\QQ) \cap \Gamma_0^4(N) \right)\backslash   \, \Gamma_0^4(N)} f( \gamma  \langle Z \rangle^*)  \det(j(\gamma,Z))^{-k}.
\end{align}
Here,  $ Z \in \HH_2 = \{z \in M_2(\CC)   \, | \, ^tz = z,\text{Im} \, z > 0 \}$,   
for $  \Sp(4,\ZZ)  \ni \gamma = \bmat{A}{B}{C}{D},\,  \gamma \langle Z \rangle := (A Z +B) (CZ +D)^{-1},\,  j(\gamma,Z) := CZ +D,\,$ $ \gamma \langle Z \rangle ^* =  \tilde{\tau}\, \text{ for }   \gamma\langle Z \rangle = \bmat{\tilde{\tau}}{\tilde{z}}{\tilde{z}}{\tilde{\tau'}}$. 

If $ N=1 $, then the Eisenstein series defined in \eqref{Klingendef_level} reduces to the classical Klingen Eisenstein series 
 \begin{equation}{\label{Klingendef}}
E_{2,1}^{k}(Z,f) = \sum_{\gamma \in C_{2,1} \backslash  \Sp(4,\ZZ)} f(\gamma \langle Z \rangle^*) \det(j(\gamma,Z))^{-k},
\end{equation}
where $ C_{2,1} := \{  \left[ \begin{smallmatrix}
* &  & * & *\\
* & * & * & * \\
* &  & * & * \\
&  &  & *
\end{smallmatrix} \right]   \in \Sp(4,\ZZ) \}$. 
If $ k \geq 6 $ is an even integer, then the series defined in \eqref{Klingendef_level} and \eqref{Klingendef} are convergent.\\

Next, we note that a Klingen Eisenstein series with respect to the paramodular subgroup $ K(N) $ (see $ \eqref{KNdefeq}$) is defined as 
\begin{align}{\label{Klingendef_levelpara}}
\tilde{E}_{2,1}^k(Z,f,N) &:= \sum_{\gamma\in D(N)} \det(j(\gamma,Z))^{-k}f((\Lfun_N\gamma\langle Z\rangle)^*),
\end{align}
 where $D(N)$ is a set of representatives for $\Lfun_N^{-1}Q(\Q)\Lfun_N\cap\Kfun(N^2)\backslash\Kfun(N^2)$, with
 \begin{equation}\label{LNdefeq}
 \Lfun_N=\left[\begin{smallmatrix}1&N\\&1\\&&1\\&&-N&1\end{smallmatrix}\right].
 \end{equation}

\subsubsection{Adelic formulation}
The classical Klingen Eisenstein series of level $ N $ defined in \eqref{Klingendef_level} can be characterized by using a special Klingen induced global representation (see Theorem~$ 7.3.1 $ and Theorem~$ 7.3.2 $, \cite{shukla2018klingen}).
Indeed, we will use this representation theoretic formulation to study the pullback of classical Klingen Eisenstein series $ E_{2,1}^{k}(Z,f) $.

Let $ (\pi, V_{\pi}) $ be a cuspidal automorphic representation of $ \GL(2,\AA) $ and let $ \chi $ be a character of ideles. Then the standard notation $ \chi \rtimes  \pi $ will denote a family of induced representations of $ \GSp(4,\AA) $ that is globally induced via the normalized parabolic induction from  the Klingen subgroup $ Q(\AA) $  of $ \GSp(4,\AA) $. More explicitly, the space $ \chi \rtimes  \pi $  consists of functions $ \tilde{\phi} \colon \GSp(4,\AA) \rightarrow  V_{\pi}$ with the transformation property 
\begin{align} 
\tilde{\phi}(hg)& = |t^2 \, (ad-bc)^{-1}|\, \chi(t) \, \pi_1(\mat{a}{b}{c}{d}) \, \tilde{\phi}(g), \nonumber \text{ for all } h =   \left[ \begin{smallmatrix}
a &   & b & * \\
* & t & * & * \\
c &   & d & * \\
&   &   & t^{-1}(ad -bc)
\end{smallmatrix}  \right]     \in Q(\AA).
\end{align}
Suppose $ {\Phi}$ is a  vector in the global induced automorphic representation  $ |\cdot|^{s} \rtimes |\cdot|^{\frac{-s}{2}} \pi $ of $ \GSp(4,\AA) $.  For $ \tilde{g} \in \GSp(4,\AA) $, we define 
\begin{align}\label{def:KlingenAdelicPullback}
 {E}(\tilde{g},s,\Phi) :=  \sum_{\gamma \in {Q}(\QQ) \backslash \GSp(4,\QQ)} (\Phi (\gamma \tilde{g}))(1).
\end{align}
Now we fix the following embedding of $H_{1,1} := \{(g, g') \in \GSp(2) \times \GSp(2): \mu_(g) = \mu_(g') \}$ in $\GSp(4)$:
\begin{equation}\label{embedding-defn}
H_{1,1} \ni (\mat{a_1}{b_1}{c_1}{d_1}, \mat{a_2}{b_2}{c_2}{d_2}) \longmapsto \left[\begin{smallmatrix}a_1&&-b_1\\&a_2&&b_2\\-c_1&&d_1\\&c_2&&d_2\end{smallmatrix}\right] \in \GSp(4).
\end{equation}
Abusing notation, let $H_{1,1}$ also denote its image in $\GSp(4)$.

\subsection{Main results on Pullback of Klingen Eisenstein series}
We will make use of the Eisenstein series \begin{align}\label{def:Eisensteinseriesmodular}
E_1(s,k,N,\tau) := \sum_{\substack{    \mat{a}{b}{c}{d} \in  \Gamma_{\infty} \backslash \Gamma_0^2(N)    }}\; \; \frac{1}{|c \tau +d|^{s+2-k}(c \tau +d)^k}, 
\end{align}
which for $ s = k-2 $ is the classical holomorphic Eisenstein series of weight $ k $ with respect to $ \Gamma_0^2(N) $, up to a possible normalization. 
Now we give our main result on the pullback of Klingen Eisenstein series. 
\begin{restatable}[]{thm}{mainpullback}\label{thm:main_pullback}
Let $ S $ be a finite set of primes and $ N = \prod_{p \in S} p^{n_p} $ be a positive integer. Let $ f$ be an elliptic cusp forms of level $ N $, weight $ k $, i.e., $ f \in S_k^{(1)}(\Gamma_0^2(N)) $, with $ k > 4 $ and even. We also assume $ f$ to be a newform. Let $ \phi$ be the  automorphic form associated with $ f $ and  let $(\pi,V_{\pi})  $ be the irreducible cuspidal automorphic representation of $ \GL(2,\AA) $ generated by $ \phi $. Let  
$$g = (g_1,g_2) \in \GL(2,\AA),   \text{ with } g_{j} =  (g_{j,\infty},1,1,1,\cdots),  $$
where notation $ (\cdot\,,\cdot) $ is as explained after $ \eqref{embedding-defn} $, and $ g_{j,\infty} = \bmat{1}{x_j}{}{1}  \bmat{\sqrt{y_j}}{}{}{\sqrt{y_j}^{-1}}$, $ \tau_j = x_j + i y_j \in \HH$.	
Then there exists a global distinguished vector $ {\Phi}$ in the global induced automorphic representation  $ |\cdot|^{s} \rtimes |\cdot|^{\frac{-s}{2}} \pi $ of $ \GSp(4,\AA) $, for $ s = k-2 $, 	such that
\begin{align}\label{Eq:main_pullback_formula}
&E_{2,1}^k(\bmat{\tau_1}{}{}{\tau_2},f,N) = (y_1 y_2)^{-\frac{s+2}{2}}  {E}(g,s,\Phi) = E_1(s,k,N,\tau_1)f(\tau_2) + E_1(s,k,N,\tau_2) f(\tau_1) \nonumber \\ &+ 2 \sum_{\substack{c, d \, \in \NN, \\ (c,d)= 1}}   \sum_{\gamma_1,\gamma_2 \in  \Gamma_{\infty} \backslash \Gamma_0^2(N)}  j(\gamma_1,\tau_1)^{-(s+2)} j(\gamma_2,\tau_2)^{-(s+2)}  f(d^2 \gamma_1 \langle \tau_1 \rangle + c^2 \gamma_2  \langle \tau_2 \rangle),
\end{align}
where $ E_{2,1}^k(Z,f,N) $ is defined in \eqref{Klingendef_level}, $ {E}(g,s,\Phi) $ is defined in \eqref{def:KlingenAdelicPullback} and $ E_1(s,k,N,\tau) $ is as in \eqref{def:Eisensteinseriesmodular}.
\end{restatable}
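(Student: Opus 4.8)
The plan is to translate the classical Klingen Eisenstein series into its adelic incarnation, pull back to $H_{1,1}$, and then unfold the resulting sum using a double-coset decomposition. First I would invoke the representation-theoretic characterization of $E_{2,1}^k(Z,f,N)$ recalled in the excerpt (Theorems 7.3.1 and 7.3.2 of \cite{shukla2018klingen}): there is a distinguished vector $\Phi$ in $|\cdot|^s \rtimes |\cdot|^{-s/2}\pi$, $s = k-2$, whose archimedean component is the standard lowest-weight vector attached to $f$ (normalized so that the factor of automorphy $\det(j(\gamma,Z))^{-k}$ appears) and whose component at each finite place $p \in S$ is the appropriate $\Gamma_0(p^{n_p})$-fixed vector, such that $(y_1y_2)^{-(s+2)/2}E(g,s,\Phi)$ computed via \eqref{def:KlingenAdelicPullback} equals $E_{2,1}^k\!\left(\bmat{\tau_1}{}{}{\tau_2},f,N\right)$ after restriction along the embedding \eqref{embedding-defn}. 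This gives the first equality in \eqref{Eq:main_pullback_formula}; the content of the theorem is the explicit evaluation of the middle term as the sum on the right.

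The key step is the double-coset computation. Restricting $E(g,s,\Phi)$ to $(g_1,g_2) \in H_{1,1}$ amounts to analyzing the orbits of $H_{1,1}(\QQ)$ acting on $Q(\QQ)\backslash \GSp(4,\QQ)$. The relevant orbit decomposition of $Q\backslash \GSp(4)/H_{1,1}$ has three types of double cosets (this is the degree-two Garrett/Böcherer picture): two ``degenerate'' orbits, on each of which the summand collapses because the $\GSp(2)$-component of $\gamma$ lands in the mirabolic/Borel and the cusp form $f$ gets evaluated trivially in one variable — these two orbits produce the terms $E_1(s,k,N,\tau_1)f(\tau_2)$ and $E_1(s,k,N,\tau_2)f(\tau_1)$, since what survives is precisely the sum over $\Gamma_\infty\backslash\Gamma_0^2(N)$ defining $E_1$ in \eqref{def:Eisensteinseriesmodular} times a single evaluation of $f$; and one ``generic'' big orbit whose stabilizer is essentially diagonal, which after choosing coset representatives indexed by coprime pairs $(c,d) \in \NN^2$ (arising from the parametrization of the embedding of $\SL(2)$-type data into the symplectic group) and by $\gamma_1,\gamma_2 \in \Gamma_\infty\backslash\Gamma_0^2(N)$, yields the double sum with summand $j(\gamma_1,\tau_1)^{-(s+2)}j(\gamma_2,\tau_2)^{-(s+2)}f(d^2\gamma_1\langle\tau_1\rangle + c^2\gamma_2\langle\tau_2\rangle)$. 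The factor of $2$ comes from the two-to-one nature of the parametrization (the pairs $(c,d)$ and $(-c,-d)$, or equivalently an overall sign in $\GSp$ versus $\PGSp$). I would carry out this orbit analysis place by place: the structure of the orbits is determined over $\QQ$ (or over each $\QQ_p$), while the level structure forces the local representatives at $p \in S$ to be chosen inside $\Gamma_0(p^{n_p})$, which is exactly what replaces $\SL(2,\ZZ)$ by $\Gamma_0^2(N)$ in the indexing sets on the right-hand side — and one must check that the local $\Gamma_0(p^{n_p})$-invariance of $\Phi_p$ is compatible with these representatives so that no extra local integrals or correction factors appear.

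The main obstacle I anticipate is the bookkeeping at the bad places $p \in S$: one must verify that the Klingen-induced local vector $\Phi_p$, which is right-invariant under the appropriate Siegel-type congruence subgroup of $\GSp(4,\ZZ_p)$, interacts with the $H_{1,1}$-double-coset representatives so as to reproduce exactly the quotient $\Gamma_\infty\backslash\Gamma_0^2(N)$ in each elliptic variable, with the coprimality condition $(c,d)=1$ arising uniformly and no spurious ramified contributions. Concretely this is a local lattice-counting argument at each $p$; away from $S$ it is the standard unramified computation of Garrett–Böcherer, and the even-weight and $k>4$ hypotheses guarantee absolute convergence so that the interchange of summation implicit in reorganizing by double cosets is justified. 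Once the generic orbit is identified with the double sum, matching the normalization $(y_1y_2)^{-(s+2)/2}$ against the archimedean factor of automorphy completes the identification of all three pieces, and assembling them gives \eqref{Eq:main_pullback_formula}.
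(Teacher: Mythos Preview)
Your proposal is correct and follows essentially the same approach as the paper: construct $\Phi$ as a tensor product of local distinguished vectors, apply the three-term double-coset decomposition $Q(\QQ)\backslash\GSp(4,\QQ)/H_{1,1}(\QQ)$ (the representatives $1$, $s_1$, and $r$ of Garrett), and compute each contribution place by place. The one point worth sharpening is your handling of the bad primes: the mechanism that cuts the sums down from $\Gamma_\infty\backslash\SL(2,\ZZ)$ to $\Gamma_\infty\backslash\Gamma_0^2(N)$ is not merely the $\Sifun(p^{n_p})$-invariance of $\Phi_p$ but the stronger fact (established in \cite{shukla2018klingen}) that $\Phi_p$ is \emph{supported only} on the single double coset $Q(\QQ_p)\cdot 1\cdot\Sifun(p^{n_p})$, so that all representatives outside $(Q(\QQ)\cap\Gamma_0^4(N))\backslash\Gamma_0^4(N)$ are killed outright.
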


The following result gives a pullback formula for Klingen Eisenstein series with respect to  the paramodular subgroup $ \Kfun(N) $.

\begin{restatable}[]{thm}{mainpullbackparamodular}\label{thm:main_pullbackparamodular}
	Let $ S $ be a finite set of primes and $ N = \prod_{p \in S} p^{n_p} $ be a positive integer. Assume $ \chi $ to be a Dirichlet character modulo $ N $. Also, $ \chi $ could be viewed as a continuous character of ideles, which we also denote by $ \chi $. Let $ f$ be an elliptic cusp form of level $ N $, weight $ k $ and character $ \chi $, i.e., $ f \in S_k^{(1)}(\Gamma_0^2(N),\chi) $, with $ k > 4 $ and even. We also assume $ f$ to be a newform. Let $ \phi$ be the  automorphic form associated with $ f $ and  let $(\pi,V_{\pi})  $ be the irreducible cuspidal automorphic representation of $ \GL(2,\AA) $ generated by $ \phi $. Let  
	$$
	g = (g_1,g_2) \in \GL(2,\AA),   \text{ with } g_{j} =  (g_{j,\infty},1,1,1,\cdots)
	,  $$
	where notation $ (\cdot\,,\cdot) $ is as explained after $ \eqref{embedding-defn} $, and $ g_{j,\infty} = \bmat{1}{x_j}{}{1}  \bmat{\sqrt{y_j}}{}{}{\sqrt{y_j}^{-1}}$, $ \tau_j = x_j + i y_j \in \HH$.	
	Then there exists a global distinguished vector $ {\Phi}$ in the global induced automorphic representation  $ \chi^{-1} |\cdot|^{s} \rtimes |\cdot|^{\frac{-s}{2}} \pi $ of $ \GSp(4,\AA) $, for $ s = k-2 $, 	such that
	\begin{align}
 &\tilde{E}_{2,1}^k(\bmat{\tau_1}{}{}{\tau_2},f,N) =	(y_1 y_2)^{-\frac{s+2}{2}} {E}(g,s,\Phi)  = E_1(s,k,N^2,\tau_1)f(\tau_2) + E_1(s,k,N^2,\tau_2) f(\tau_1) \nonumber \\
&+ 2 \sum_{\substack{c ,d \, \in \NN,  \\ (c,d)= 1}}  \sum_{\substack{\gamma_1 \in  \Gamma_{\infty} \backslash \SL(2,\ZZ), \\ \gamma_2 \in  \Gamma_{\infty} \backslash \Gamma_0^2(N^2)}}  j(\gamma_1,\tau_1)^{-(s+2)} j(\gamma_2,\tau_2)^{-(s+2)}  f(d^2 \gamma_1 \langle \tau_1 \rangle + c^2 \gamma_2  \langle \tau_2 \rangle),
	\end{align}
	where $ \tilde{E}_{2,1}^k(Z,f,N) $ is defined in $ \eqref{Klingendef_levelpara} $, $ {E}(g,s,\Phi) $ is defined in \eqref{def:KlingenAdelicPullback} and $ {E_1}(s,k,N,\tau) $ is as in \eqref{def:Eisensteinseriesmodular}.
\end{restatable}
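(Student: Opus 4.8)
The plan is to reduce Theorem~\ref{thm:main_pullbackparamodular} to Theorem~\ref{thm:main_pullback} by exploiting the conjugation relation between the paramodular subgroup and a Siegel-type congruence subgroup. Recall from \eqref{Klingendef_levelpara} that $\tilde{E}_{2,1}^k(Z,f,N)$ is a sum over representatives $D(N)$ for $\Lfun_N^{-1}Q(\Q)\Lfun_N\cap\Kfun(N^2)\backslash\Kfun(N^2)$, twisted by $\Lfun_N$. The first step is to conjugate the whole picture by $\Lfun_N$: writing $\gamma' = \Lfun_N\gamma\Lfun_N^{-1}$, the index set $D(N)$ is carried to a set of representatives for $Q(\Q)\cap\Lfun_N\Kfun(N^2)\Lfun_N^{-1}\backslash\Lfun_N\Kfun(N^2)\Lfun_N^{-1}$, and the inner argument $(\Lfun_N\gamma\langle Z\rangle)^*$ becomes $(\gamma'\langle\Lfun_N\langle Z\rangle\rangle)^*$. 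The key algebraic input I would establish (or quote from \cite{shukla2018klingen}) is that $\Lfun_N\Kfun(N^2)\Lfun_N^{-1}$ contains, or is commensurable in the relevant way with, the Siegel congruence subgroup $\Gamma_0^4(N^2)$ up to the $\GL(2)\times\GL(2)$ part seeing only $\SL(2,\ZZ)$ in the first factor — this is precisely the mechanism that explains why the right-hand side of the theorem has $\Gamma_\infty\backslash\SL(2,\ZZ)$ in the $\gamma_1$ sum but $\Gamma_\infty\backslash\Gamma_0^2(N^2)$ in the $\gamma_2$ sum, and why the level jumps from $N$ to $N^2$.

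Second, on the adelic side I would set up the distinguished vector $\Phi$ in $\chi^{-1}|\cdot|^s\rtimes|\cdot|^{-s/2}\pi$ exactly as in the proof of Theorem~\ref{thm:main_pullback}, but now choosing the local vectors at the finite primes $p\in S$ to be the appropriate paramodular-type vectors (the paramodular newform vectors, whose existence and uniqueness come from the local newform theory for $\GSp(4)$) rather than the Siegel congruence vectors. The twist by $\chi$ enters only through the choice of central/torus character in the Klingen induction data, so the local computation of $(\Phi(\gamma\tilde g))(1)$ is formally identical to the untwisted case once one tracks the character. I would then verify, prime by prime, that the global Eisenstein series $E(g,s,\Phi)$ with this choice of $\Phi$ equals $(y_1y_2)^{(s+2)/2}\tilde E_{2,1}^k\!\left(\bmat{\tau_1}{}{}{\tau_2},f,N\right)$ by comparing the coset decomposition $Q(\Q)\backslash\GSp(4,\Q)$ restricted via strong approximation against the classical index set $D(N)$ identified in the first step.

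Third, with the identification $\tilde E_{2,1}^k=(y_1y_2)^{-(s+2)/2}E(g,s,\Phi)$ in hand, I would run the pullback computation for $E(g,s,\Phi)$ restricted to the embedded $H_{1,1}$ through the embedding \eqref{embedding-defn}, exactly as in Theorem~\ref{thm:main_pullback}: decompose $Q(\Q)\backslash\GSp(4,\Q)/H_{1,1}(\Q)$ into the relevant double cosets, whose representatives are governed by coprime pairs $(c,d)\in\NN^2$ together with $\Gamma_\infty\backslash\Gamma_0$ data in each factor, and on each piece evaluate the local integrals/sums at the finite places to obtain the classical factors $j(\gamma_i,\tau_i)^{-(s+2)}$ and the argument $d^2\gamma_1\langle\tau_1\rangle+c^2\gamma_2\langle\tau_2\rangle$ of $f$. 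The only difference from Theorem~\ref{thm:main_pullback} is that the level constraints coming from the paramodular-to-Siegel conjugation force $N\mapsto N^2$ and leave the first factor at full level $\SL(2,\ZZ)$; this matches the asserted formula, and the two rank-one boundary terms $E_1(s,k,N^2,\tau_j)f(\tau_{3-j})$ come from the open (identity) double coset exactly as before.

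I expect the main obstacle to be the first step: pinning down precisely what $\Lfun_N\Kfun(N^2)\Lfun_N^{-1}$ is as a congruence subgroup of $\Sp(4)$ and matching the resulting coset space with $\Gamma_\infty\backslash\SL(2,\ZZ)$ in the first $\GL(2)$-slot and $\Gamma_\infty\backslash\Gamma_0^2(N^2)$ in the second. This is a somewhat delicate local lattice computation at each $p\mid N$, and getting the levels and the asymmetry between the two factors exactly right — rather than off by a power of $p$ — is where care is needed; everything downstream is a faithful rerun of the argument for Theorem~\ref{thm:main_pullback} with the character $\chi$ carried along.
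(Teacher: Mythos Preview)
Your proposal is correct and follows essentially the same route as the paper. The only difference is one of framing: you propose to first analyze the classical conjugate $\Lfun_N\Kfun(N^2)\Lfun_N^{-1}$ and then build the adelic picture, whereas the paper goes directly to the adelic setup, choosing at each $p\mid N$ a local vector $\Phi_p$ that is $\para_p(p^{n_p})$-invariant and supported only on the double coset $Q(\QQ_p)\Lfun_{p^{n_p}}\para_p(p^{2n_p})$ (this support statement is quoted from the author's thesis, not from general $\GSp(4)$ newform theory). The coset restriction you identify as the ``main obstacle'' is then read off from that support condition together with a lemma from \cite{schmidt2018klingen}, yielding exactly the asymmetry $\gamma_1\in\Gamma_\infty\backslash\SL(2,\ZZ)$, $\gamma_2\in\Gamma_\infty\backslash\Gamma_0^2(N^2)$ in the $T_r$ term and $\Gamma_0^2(N^2)$ in both boundary terms; after that, the computation of $I(T_1)$, $I(T_{s_1})$, $I(T_r)$ is declared to be word-for-word the same as in Theorem~\ref{thm:main_pullback}.
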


Next we give an application of the pullback formula  \eqref{Eq:main_pullback_formula} in the special case of $ N=1 $.
\subsection{Main results on critical \texorpdfstring{$L$}{L}-values}
It can be safely stated that $ L $-functions are one of the central objects in modern number theory. Indeed, many important theorems and conjectures in number theory are related to $ L $-functions. Critical values of $ L$-functions hold arithmetically significant information and as such have been widely studied.  Dirichlet proved his famous result on primes in arithmetic progressions using the non-vanishing of Dirichlet $ L $-functions, i.e., $ L(\chi,1) \neq 0 $ for any non-principal Dirichlet character $ \chi $ of period $ N $. For more modern examples, one can refer to Birch and Swinnerton-Dyer \cite{MR0230727}, Bloch-Beilinson conjecture [\cite{MR575206}, \cite{MR944989}] and Bloch-Kato conjecture \cite{MR1086888}, which are some of the famous conjectures involving $ L $-functions. 

In the following we describe our identities involving certain critical $ L $-values.

\subsection{Preliminaries}
We denote the Rankin convolution $L$-function of modular forms $f$ and $g$ by $L(s,f \otimes g)$. Also, let $L(s,\Sym^2 f)$ denote the symmetric square $L$-function of $f$.  
For $T = \mat{a}{b/2}{b/2}{c}$,  we define the theta function
\begin{align*}
\vartheta_T(z) &:= \vartheta_{a,b,c}(z) := \sum_{(m,n)\in \mathbb{Z}^2} q^{am^2+bmn + cn^2}.
\end{align*}
For notational convenience, we set $$ \vartheta_1(z) :=  \vartheta_{1,0,1}(z) =\sum_{(m,n)\in \mathbb{Z}^2} q^{m^2+n^2},$$ 
and $$\vartheta_2(z) :=  \vartheta_{1,1,1}(z) = \sum_{(m,n)\in \mathbb{Z}^2}q^{m^2+mn+n^2}.$$
For $T$ such that $-\det(2T)$ is a fundamental discriminant, let $\chi_{-\det(2T)}$ be the Dirichlet character associated to the field $\mathbb{Q}(\sqrt{-\det(2T)})$. 
Let $ f $ be a normalized elliptic cusp form of weight $ k $, i.e., $ f \in S_{k}^{(1)}  $. Let
\begin{equation}
F_{f,c,d}(\tau_1,\tau_2) = \sum_{\gamma_1, \gamma_2 \in \Gamma_{\infty} \backslash \SL(2,\ZZ)} j(\gamma_1,\tau_1)^{-k} j(\gamma_2,\tau_2)^{-k} f\left( c^2 \gamma_2 \langle\tau_2 \rangle + d^2 \gamma_1 \langle \tau_1 \rangle \right),
\end{equation}
where $ \tau_j  $ are elements of \Poincare{} upper half-plane $ \HH $, $ c $ and $ d $ are co-prime integers, and $ \Gamma_\infty = \{ \pm \bmat{1}{m}{}{1} \,|\, m \in \ZZ  \} $ is the stabilizer of the cusp at infinity. Suppose $ F_{f,c,d}(\tau_1,\tau_2) $ has the following Fourier expansion
\begin{equation}
F_{f,c,d}(\tau_1,\tau_2) = \sum a_{c,d}(n_1,n_2) q_1^{n_1} q_2^{n_2},
\end{equation}
where $ q_j = e^{i 2\pi \tau_j} $. We define
\begin{equation} \label{Eq:defA_f}
A_f(n_1,n_2) := 2 \sum_{\substack{ c ,d \, \in \NN, (c,d) =1 }}  a_{c,d}(n_1,n_2).
\end{equation}

\begin{restatable}[\textbf{of Theorem \ref{thm:main_pullback}, and Mizumoto, Theorem 1 \cite{MR636883}}]{cor}{mainidentity}
	\label{thm:main_identity}
	Let $ f $ be a normalized elliptic cusp form of weight $ k $, i.e., $ f \in S_{k}^{(1)}  $. Then we have the following identity.
	\begin{align} \label{eq:theorem1.1}
	\frac{4}{\zeta(1-k)} + A_f(1,1) = 2+\frac{(-1)^{k/2}(k-1)!(2\pi)^{k-1}}{(2k-2)!L(2k-2,\Sym^2 f)}  \left[2^{2k-3}L(k-1,\chi_{-4})L(k-1,f \otimes \vartheta_1) \right. \nonumber \\ \left. + 2 \cdot 3^{k-3/2}L(k-1,\chi_{-3})L(k-1,f \otimes \vartheta_2)\right],
	\end{align}
	with $ A_f(1,1) $ given by $ \eqref{Eq:defA_f} $.	
\end{restatable}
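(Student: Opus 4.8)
The plan is to specialize the pullback formula \eqref{Eq:main_pullback_formula} to $N=1$, set $\tau_1=\tau_2=\tau$, and then compare the first Fourier coefficient (the $(1,1)$-coefficient, i.e.\ the coefficient of $q_1 q_2$ at $\tau_1=\tau_2$, or more precisely work with the diagonal restriction) on both sides, using Mizumoto's explicit Fourier expansion of the pullback of the Klingen Eisenstein series from \cite{MR636883}. Concretely, when $N=1$ the left-hand side of \eqref{Eq:main_pullback_formula} is $E_{2,1}^k\!\bigl(\mat{\tau_1}{}{}{\tau_2},f\bigr)$, the classical pullback studied by Garrett and Mizumoto; its Fourier--Jacobi / Fourier expansion in the two variables $\tau_1,\tau_2$ has coefficients expressible in terms of $L$-values of $f$ twisted by theta series and by the symmetric-square $L$-function, which is exactly the shape of the right-hand side of \eqref{eq:theorem1.1}. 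The right-hand side of \eqref{Eq:main_pullback_formula}, on the other hand, is
\[
E_1(k-2,k,1,\tau_1)f(\tau_2)+E_1(k-2,k,1,\tau_2)f(\tau_1)+2\!\!\sum_{\substack{c,d\in\NN\\(c,d)=1}}\sum_{\gamma_1,\gamma_2}\cdots,
\]
where for $s=k-2$ the function $E_1(k-2,k,1,\tau)$ is the classical weight-$k$ level-one holomorphic Eisenstein series $E_k(\tau)$ (up to the standard normalization), whose constant term is $1$ and whose first coefficient involves $\zeta(1-k)$ via $E_k = 1 - \tfrac{2k}{B_k}\sum \sigma_{k-1}(n)q^n$, and the double sum is precisely $A_f$-type data by the definition \eqref{Eq:defA_f} of $A_f(n_1,n_2)$ and $F_{f,c,d}$.

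The steps, in order, would be: (i) put $N=1$ in Theorem~\ref{thm:main_pullback}, noting $\Gamma_0^2(1)=\SL(2,\ZZ)$, so the double-sum term becomes exactly $2\sum_{(c,d)=1}F_{f,c,d}(\tau_1,\tau_2)$ in the notation preceding the corollary (after matching $s+2=k$); (ii) identify $E_1(k-2,k,1,\tau)$ with the normalized level-one Eisenstein series $E_k(\tau)$, recording its $q$-expansion in terms of $\zeta(1-k)$ (here $\tfrac{4}{\zeta(1-k)}$ on the LHS of \eqref{eq:theorem1.1} will arise from matching the normalizations of the constant term and the first Fourier coefficient of $E_k f + f E_k$ along the diagonal); (iii) invoke Mizumoto's Theorem~1 of \cite{MR636883}, which gives the Fourier expansion of $E_{2,1}^k$ restricted to the diagonal in closed form with the Rankin--Selberg $L$-values $L(k-1,f\otimes\vartheta_T)$, the quadratic $L$-values $L(k-1,\chi_{-\det 2T})$, and the symmetric square $L(2k-2,\Sym^2 f)$ in the denominator; (iv) read off the coefficient of $q_1q_2$ (equivalently the second Fourier coefficient of the diagonal restriction), observing that the only positive-definite $T=\mat{a}{b/2}{b/2}{c}$ with $a=c=1$ are $T=\mat{1}{0}{0}{1}$ and $T=\mat{1}{\pm1/2}{\pm1/2}{1}$, which produce $\vartheta_1$ and $\vartheta_2$ and the discriminants $-4$ and $-3$ exactly as in \eqref{eq:theorem1.1}; (v) equate the two expressions for this coefficient and rearrange to isolate $\tfrac{4}{\zeta(1-k)}+A_f(1,1)$ on one side.

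The main obstacle is step (iii)--(iv): correctly transcribing Mizumoto's Fourier-coefficient formula into the present normalization and bookkeeping the numerous constants — the factor $(-1)^{k/2}(k-1)!(2\pi)^{k-1}/(2k-2)!$, the powers $2^{2k-3}$ and $2\cdot 3^{k-3/2}$, and the interplay between $\det(2T)$ and the conductor of $\chi_{-\det(2T)}$ — so that the diagonal restriction of Mizumoto's expansion matches term-by-term with the right-hand side of \eqref{Eq:main_pullback_formula}. One must also be careful that restricting to the diagonal $\tau_1=\tau_2$ and extracting ``the $(1,1)$ coefficient'' is done consistently: the natural route is to keep $\tau_1,\tau_2$ independent, extract the coefficient of $q_1^1 q_2^1$ in the two-variable Fourier expansion on both sides (this picks out $n_1=n_2=1$, hence $a=c=1$ for the index $T$ of the Fourier--Jacobi coefficients of $E_{2,1}^k$), and only then compare; the contribution $2$ on the right-hand side of \eqref{eq:theorem1.1} comes from the cross terms $E_k(\tau_1)f(\tau_2)+E_k(\tau_2)f(\tau_1)$ whose $q_1 q_2$-coefficient is $a_f(1)\cdot(\text{first coeff of }E_k)\cdot 2$ suitably normalized, together with the normalization constant that also yields $\tfrac{4}{\zeta(1-k)}$. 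Everything else is a routine, if lengthy, matching of Dirichlet series and Gamma factors against the known functional equations, using that $f$ is a normalized Hecke eigenform of level one so that $L(s,f\otimes\vartheta_i)$ factors through the coefficients $a_f(n)$ in the expected way.
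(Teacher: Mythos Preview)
Your overall strategy matches the paper's: specialize \eqref{Eq:main_pullback_formula} to $N=1$, keep $\tau_1,\tau_2$ independent, and equate the $q_1^1 q_2^1$-coefficients on both sides using Mizumoto's formula. However, step~(iv) has a genuine gap that would leave you with the wrong identity.

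You claim that the only $T=\mat{1}{b/2}{b/2}{1}$ contributing are the positive-definite ones with $b\in\{0,\pm 1\}$. But the Fourier expansion of $E_{2,1}^k$ runs over all positive \emph{semi}-definite half-integral $T$, and for $n_1=n_2=1$ the range is $b\in\{-2,-1,0,1,2\}$. The two matrices $T=\mat{1}{\pm 1}{\pm 1}{1}$ have determinant zero; they are unimodularly equivalent to $\mat{1}{0}{0}{0}$, and since $\lim_{y\to\infty}E_{2,1}^k(\mat{\tau}{}{}{iy},f)=f(\tau)$ with $a(1)=1$, each of these degenerate coefficients equals $1$. Their sum is the constant $2$ on the right-hand side of \eqref{eq:theorem1.1}. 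Mizumoto's Theorem~1 in \cite{MR636883} only covers the case where $-\det(2T)$ is a fundamental discriminant, so it does not apply to these rank-one terms; you need the $\Phi$-operator argument separately.

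Correspondingly, your attribution of the ``$2$'' to the cross terms $E_k(\tau_1)f(\tau_2)+E_k(\tau_2)f(\tau_1)$ is wrong. The $q_1q_2$-coefficient of that sum is exactly
\[
2\cdot a(1)\cdot\frac{2}{\zeta(1-k)}=\frac{4}{\zeta(1-k)},
\]
with no additional additive constant. So the right-hand side of the pullback formula contributes $\tfrac{4}{\zeta(1-k)}+A_f(1,1)$, and the left-hand side (via Mizumoto plus the two rank-one terms) contributes $2+(\text{the bracketed $L$-value expression})$. Without the $b=\pm 2$ contributions your computation would miss the ``$2+$'' entirely.

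Finally, drop the opening sentence about setting $\tau_1=\tau_2$ and ``diagonal restriction'': as you yourself note later, the comparison must be done in two independent variables.
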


Now we state another result obtained as an application of the pullback formula for Klingen Eisenstein series.
\begin{restatable}[\textbf{of Theorem \ref{thm:main_pullback}, and Mizumoto, Theorem 1 \cite{MR733590}}]{cor}{maintheorem}\label{thm:main_theorem}
	Let $ f $ be a normalized cuspidal eigenform of weight $ k $ with Fourier expansion $\ds f(z) = \sum_{n=1}^{\infty} a(n) q^n$. For non-negative numbers $ n_1,n_2 $, let
	\begin{align}
	\Lambda (n_1,n_2) := \left\{ \left(\begin{smallmatrix} n_1& b/2 \\ b/2 &n_2 \end{smallmatrix} \right)  \vert \, b \in \ZZ, 4n_1 n_2 - b^2 \geq 0 \right\}.
	\end{align}
	For $ T \in \Lambda (n_1,n_2) $, let the positive integer $ \ff_{\scriptscriptstyle{T}} $ be defined as $\ds  \ff_{\scriptscriptstyle{T}} := \sqrt{\frac{\det{(2T)}}{\Delta(T)}} $, where 
	$ -\Delta(T) $ is the discriminant of the quadratic field $ \QQ(\sqrt{- 2 \det(T)}) $. 
	If $ \vartheta_T(z) $ has the Fourier expansion $ \vartheta_T(z) = \sum_{n=1}^{\infty} b_T(n) q^n $, then let the $ v^2 $-twisted Rankin-Selberg $ L $-function of $ f $ and $ \vartheta_T $ be defined as 
	\[
	L(s,f \otimes^{\prime} \vartheta_T^{(v)} ) = \sum_{n=1}^{\infty} a(n) b_T(v^2 n) n^{-s}.
	\]
	Let $ \gcd(n_1,n_2)=1 $. Then 	\begin{align*}
	& \sum_{T \in \Lambda(n_1,n_2)} (-1)^{\frac{k}{2}} \Delta(T)^{k- \frac{3}{2}}  \left( \frac{L(k-1, \chi_{-\det(2T)}) }{L(2k-2,f,\Sym^2)}\right) \left( \sum_{v |\ff_{\scriptscriptstyle{T}}, v > 0} L(k-1,f \otimes^{\prime}  \phi_{T,v}) \right) \\ &= \frac{ ((2k-2)!)}{(2 \pi)^{k-1} (k-1)! }  \left( \frac{2}{\zeta(1-k)} \left[a(n_1) \sigma_{k-1}(n_2) + a(n_2) \sigma_{k-1}(n_1) \right] + A_f(n_1,n_2) \right),
	\end{align*}
	where 
	$$\phi_{T,v} =   (\ff_{\scriptscriptstyle{T}} v^{-1})^{2k-3} \prod_{\substack{p | \ff_{\scriptscriptstyle{T}} v^{-1}\\ p :\text{ prime}}} \, \left(1 - p^{1-k} \, \chi_{-\det(2T)} (p)\right) \vartheta_T^{(v)},$$ and $\ds \sigma_k(n) := \sum_{d |n}  d^k$ is the usual divisor function.	
\end{restatable}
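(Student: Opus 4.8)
The plan is to extract the requested identity by specializing the pullback formula of Theorem~\ref{thm:main_pullback} at $N=1$ to a Fourier coefficient, and matching this against Mizumoto's Fourier expansion of the Klingen Eisenstein series. Setting $s=k-2$ in \eqref{Eq:main_pullback_formula}, we obtain, for $f\in S_k^{(1)}$ normalized,
\begin{equation*}
E_{2,1}^k\!\left(\bmat{\tau_1}{}{}{\tau_2},f\right) = E_1(k-2,k,1,\tau_1)f(\tau_2) + E_1(k-2,k,1,\tau_2)f(\tau_1) + \sum_{\substack{c,d\in\NN\\(c,d)=1}} 2\,F_{f,c,d}(\tau_1,\tau_2),
\end{equation*}
where $E_1(k-2,k,1,\tau)$ is, up to the standard normalization, the classical weight-$k$ Eisenstein series $E_k(\tau) = 1 - \tfrac{2k}{B_k}\sum_{n\geq 1}\sigma_{k-1}(n)q^n$, and the normalization constant is read off from \eqref{def:Eisensteinseriesmodular} to be $\tfrac{1}{2}$ (or rather the relation is $E_1(k-2,k,1,\tau) = \tfrac{2}{\zeta(1-k)}E_k^{\mathrm{norm}}(\tau)$ with leading term fixed by the single term $c\tau+d=1$; I will pin this constant down precisely using $\zeta(1-k) = -B_k/k$). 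Comparing the $q_1^1 q_2^1$-Fourier coefficient of both sides: the right-hand side contributes $\tfrac{2}{\zeta(1-k)}[\sigma_{k-1}(1)a(1) + \sigma_{k-1}(1)a(1)] + A_f(1,1) = \tfrac{4}{\zeta(1-k)} + A_f(1,1)$, using $a(1)=\sigma_{k-1}(1)=1$ and the definition \eqref{Eq:defA_f} of $A_f(n_1,n_2)$.

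For the left-hand side I invoke Mizumoto's Theorem~1 of \cite{MR636883}, which gives the Fourier expansion of $E_{2,1}^k(Z,f)$ restricted to the diagonal in terms of a sum over $2\times 2$ half-integral symmetric matrices $T$, with each Fourier coefficient expressed via $L(k-1,\chi_{-\det(2T)})$, a Rankin--Selberg convolution $L(k-1,f\otimes\vartheta_T)$, the symmetric square value $L(2k-2,\Sym^2 f)$, and elementary factors. Extracting the coefficient of $q_1 q_2$ on the diagonal picks out exactly those $T=\mat{1}{b/2}{b/2}{1}$ with $b\in\ZZ$ and $4-b^2\geq 0$, i.e.\ $b\in\{-2,-1,0,1,2\}$; the cases $b=\pm 2$ give $\det(2T)=0$ (degenerate, contributing to the Eisenstein part, not here), and $b=0$, $b=\pm 1$ give $-\det(2T) = -4$ and $-3$ respectively. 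These two genuinely positive-definite classes, with $\vartheta_{1,0,1}=\vartheta_1$ and $\vartheta_{1,1,1}=\vartheta_2$ (and multiplicity $2$ for the $\pm 1$ pair), produce the bracketed expression $2^{2k-3}L(k-1,\chi_{-4})L(k-1,f\otimes\vartheta_1) + 2\cdot 3^{k-3/2}L(k-1,\chi_{-3})L(k-1,f\otimes\vartheta_2)$, multiplied by the global constant $\tfrac{(-1)^{k/2}(k-1)!(2\pi)^{k-1}}{(2k-2)!\,L(2k-2,\Sym^2 f)}$, plus the ``$2$'' coming from the degenerate/rank-one terms in Mizumoto's formula. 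Equating the two computed values of the $q_1q_2$-coefficient yields \eqref{eq:theorem1.1}.

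The main obstacle is bookkeeping of normalizations: Mizumoto's $L$-functions, theta series, and Eisenstein series each carry conventions (Euler factors at bad primes, $2\pi$ powers, sign $(-1)^{k/2}$, the conductor factor $\ff_T$) that must be reconciled with the normalizations implicit in \eqref{def:Eisensteinseriesmodular} and in the definitions of $\vartheta_1,\vartheta_2$; in particular one must check that for the two forms $T$ at hand the conductor $\ff_T$ equals $1$ so that the twisted convolution $L(k-1,f\otimes^{\prime}\phi_{T,v})$ of Corollary~\ref{thm:main_theorem} collapses to the plain $L(k-1,f\otimes\vartheta_T)$, and that $\Delta(T) = \det(2T)$ for these fundamental discriminants so the power $\Delta(T)^{k-3/2}$ becomes $4^{k-3/2}=2^{2k-3}$ and $3^{k-3/2}$ as written. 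A secondary check is that the contribution of the rank-one and degenerate $T$ (the $b=\pm 2$ terms together with Mizumoto's constant term) sums cleanly to the additive ``$2$'' on the right-hand side; this should follow from the same computation that produces the $f(\tau_i)\cdot E_k(\tau_j)$ terms on the pullback side, providing an internal consistency check on the constants. Once these normalization matches are verified, the identity \eqref{eq:theorem1.1} is immediate, and Corollary~\ref{thm:main_theorem} follows by the same argument applied to a general $q_1^{n_1}q_2^{n_2}$-coefficient with $\gcd(n_1,n_2)=1$, keeping the conductor factors $\ff_T$ and the sum over $v\mid\ff_T$ this time.
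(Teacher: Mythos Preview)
Your approach is correct and matches the paper's: both proofs simply equate the $q_1^{n_1}q_2^{n_2}$-Fourier coefficient of the pullback formula \eqref{Eq:main_pullback_formula} (at $N=1$, $s=k-2$) with the same coefficient computed via Mizumoto's explicit expression for $A(T,f)$, and then rearrange constants.

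That said, almost all of your write-up is devoted to the special case $n_1=n_2=1$, which is Corollary~\ref{thm:main_identity}, not the statement you were asked to prove. The target here is Corollary~\ref{thm:main_theorem}, the general $(n_1,n_2)$ identity with $\gcd(n_1,n_2)=1$; only your final sentence addresses it. For that general case the relevant input is Theorem~1 of \cite{MR733590} (the second Mizumoto paper, where the conductor $\ff_T$ and the sum over $v\mid\ff_T$ appear), not Theorem~1 of \cite{MR636883} (which handles only fundamental-discriminant $T$). The paper's own proof of Corollary~\ref{thm:main_theorem} is correspondingly one sentence: invoke \cite{MR733590}, compare coefficients, done. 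Your detailed $(1,1)$ analysis---including the discussion of $b=\pm2$, the ``$+2$'' from the rank-one matrices, and the collapse of $\phi_{T,v}$ to $\vartheta_T$ when $\ff_T=1$---is all correct and is exactly how the paper proves Corollary~\ref{thm:main_identity}, but it is tangential to the statement at hand.

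One small wording issue: you say the degenerate $b=\pm2$ terms contribute ``to the Eisenstein part, not here,'' then later say they ``sum cleanly to the additive `$2$'.'' The second is right: in the paper these rank-one $T$ are unimodularly equivalent to $\mat{1}{0}{0}{0}$, so $A(T,f)=a(1)=1$ each, accounting for the $+2$ in \eqref{eq:theorem1.1}. They are genuine Fourier coefficients of $E_{2,1}^k$, not an ``Eisenstein part'' to be discarded.
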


\begin{remark}\label{remark:cor}
	\leavevmode
	\begin{enumerate}
		\item 	The proofs of  \Cref{thm:main_identity}  and  \Cref{thm:main_theorem}  depend on pullback formula for Klingen Eisenstein series  given in Theorem~$\ref{thm:main_pullback} $ for $ N=1 $, and the Fourier series expansion of Klingen Eisenstein series calculated by Mizumoto \cite{MR733590}. 
		\item  For $ N=1 $, the pullback formula in Theorem~$\ref{thm:main_pullback}$ is not a new result. In fact, for $ N=1 $ it reduces to the degree two ($ n=1 $) case of the pullback formula of Heim in Theorem $ 2.3 $ in \cite{heim1999pullbacks}, which is essentially due to Garrett \cite{garrett1987decomposition}. 
		\item 	One can obtain more identities in the spirit of \Cref{thm:main_identity}  and  \Cref{thm:main_theorem}, by using $ (1.3) $ and Theorem~$ 1 $ in \cite{MR733590}.
		\item  Mizumoto presented the Fourier series expansion of Klingen Eisenstein series in \cite{MR733590} for the full level case, i.e., $ N=1 $.   As of now, to the best of our knowledge, finding the Fourier	coefficients of Klingen Eisenstein series with respect to $ \GammaON $ for $ N > 1 $, is still an open problem. If such a formula for the Fourier	coefficients of Klingen Eisenstein series with respect to $ \GammaON $ for $ N>1 $, becomes available,  then  in conjunction with our pullback results, immediately new and interesting identities involving critical $ L $-values could be derived similar to the proof of \Cref{thm:main_identity}  and  \Cref{thm:main_theorem}.
	\end{enumerate}
\end{remark}

After describing our main results, now we give a brief organization of the rest of the paper. In Sect.~$ \ref{sec:notation} $ we fix our notation. We will present our main result on pullback of Klingen Eisenstein series with respect to Siegel congruence subgroup in Sect.~$ \ref{Sect:double_coset} $. We will describe pullback of Klingen Eisenstein series with respect to Paramodular subgroup in Sect.~$ \ref{Sect:paramodular} $. Finally in Sect.~$ \ref{Sect:applications} $ we will give some applications of the pullback formula.

\section{Notation} \label{sec:notation}
We realize the group $ \GSp(2n) $ as 
$$\GSp(2n) := \{g \in \GL(2n) \ | \  ^tgJg = \lambda(g) J  \text{ for some } \lambda(g) \in \GL(1) \}, $$  with   $ J = \bmat{}{I_n}{-I_n}{} $. Siegel half space of degree (or genus) $ n $ will be denoted by $ \HH_n := \{z \in M_{n}(\CC)  \, | \, ^tz = z,\text{Im} \, z > 0 \} $. Let $ B(2n) $ be the Borel (minimal parabolic) subgroup of $ \GSp(2n) $ and let $ N(2n) $ be its unipotent subgroup. Let $ Q(\QQ) $ be the Klingen parabolic subgroup of $ \GSp(4,\QQ) $ consisting of the matrices of the form 
\begin{equation}
 \{  \left[ \begin{matrix}
 * &  & * & *\\
 * & * & * & * \\
 * &  & * & * \\
 &  &  & *
 \end{matrix} \right] \ | \ * \in \QQ \}.
\end{equation}
We will denote the Siegel congruence subgroup of level $ N $ as 
\begin{equation}\label{Eq:defGammaN}
\Gamma_{0}(N) = \Gamma_{0}^{4}(N):= \{ \left[\begin{matrix*}[r] * & * &* &* \\ * &* & * &* \\a & b & *& *\\ c & d  & * & * \end{matrix*}\right ] \in \Sp(4,\ZZ)  \ | \ a,b,c,d \equiv 0 \mod{N} \}.
\end{equation}
Let 
\begin{equation}
 \Gamma_{0}^2(N) := \{ \bmat{a}{b}{c}{d} \in \SL(2,\ZZ) \ | \ c\equiv 0 \mod{N} \}
\end{equation}
 be the Hecke congruence subgroup of $ \SL(2,\ZZ) $. Let 
 \begin{equation}
 \Gamma_{\infty} := \Gamma^2_{\infty}(\ZZ) := \{\pm \bmat{1}{m}{}{1} |\, m \in \ZZ \}.
 \end{equation}
We will denote the local Siegel congruence subgroup of $ \GSp(4,\QQ_p) $ of level $ p^n $ by 
\begin{equation}
\Si{n} := \{\alpha = \left[\begin{matrix*}[r] * & * &* &* \\ * &* & * &* \\a & b & *& *\\ c & d  & * & * \end{matrix*}\right ] \in \GSp(4,\ZZ_p) \ | \ a,b,c,d \in p^n \ZZ_p \}.
\end{equation}
We denote { paramodular subgroup} of level $N$ as
\begin{equation}\label{KNdefeq}
\para(N) :=\Sp(4,\Z)\cap\left[ \begin{smallmatrix}
\Z&N\Z&\Z&\Z\\\Z&\Z&\Z&N^{-1}\Z\\\Z&N\Z&\Z&\Z\\N\Z&N\Z&N\Z&\Z\end{smallmatrix}  \right].
\end{equation}
The local version of the paramodular group is defined as 
\begin{equation}\label{Kpnlocaleq}
\para_p(p^n) :=\{g\in\GSp(4,\Q_p)\:|\:\lambda(g)\in\Z_p^\times,\:g\in\left[ \begin{smallmatrix}
\Z_p&p^n\Z_p&\Z_p&\Z_p\\\Z_p&\Z_p&\Z_p&p^{-n}\Z_p\\\Z_p&p^n\Z_p&\Z_p&\Z_p\\p^n\Z_p&p^n\Z_p&p^n\Z_p&\Z_p\end{smallmatrix}  \right]\}.
\end{equation}
We denote by $ L_N $ the following matrix
\begin{equation}
L_N := \left[ \begin{matrix}
1 & N &  & \\
 &  1&  &  \\
 &  &1 &  \\
&  & -N & 1
\end{matrix} \right].
\end{equation}

\section{Pullback of Klingen Eisenstein series with respect to Siegel congruence subgroup} \label{Sect:double_coset}
We note a double coset decomposition which will be useful later.

\begin{lem}
	A complete and minimal system of representatives for the double coset decomposition $ Q(\QQ) \backslash \GSp(4,\QQ)  / H_{1,1}(\QQ) $ is given by \\
	\begin{align*}
	1 ,\qquad s_1 = \left[\begin{smallmatrix*}[r]
	& 1 &  &  \\
	1 &  &  &  \\
	&  &  & 1 \\
	&  & 1 &
	\end{smallmatrix*}\right], \qquad   r = 
	\left[\begin{smallmatrix*}[r]
	1 &  &  &  \\
	1 & 1 &  &  \\
	&  & 1 & -1 \\
	&  &  & 1
	\end{smallmatrix*}\right].
	\end{align*}
	
\end{lem}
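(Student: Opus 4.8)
The plan is to decompose $\GSp(4,\QQ)$ into $Q(\QQ)$–left cosets and $H_{1,1}(\QQ)$–right cosets by exploiting the action of $\GSp(4)$ on the variety of isotropic lines (or, dually, on isotropic planes) for the symplectic form $J$. Recall that $Q(\QQ)$ is the stabilizer of the isotropic line $\ell_0 = \QQ e_1$ (in suitable coordinates), so $Q(\QQ)\backslash\GSp(4,\QQ)$ is identified with the set of isotropic lines in $\QQ^4$, i.e.\ with $\mathbb{P}^3(\QQ)$ minus nothing (every line is isotropic in the rank-4 case only on the quadric $\langle v,v\rangle=0$ for the symmetric form — but here the form is alternating, so every line is isotropic). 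Thus $Q(\QQ)\backslash\GSp(4,\QQ) \cong \mathbb{P}^3(\QQ)$, the set of lines $\ell\subset\QQ^4$. The double cosets $Q\backslash\GSp(4)/H_{1,1}$ are then the orbits of $H_{1,1}(\QQ)$ on $\mathbb{P}^3(\QQ)$, and I would classify these orbits directly.

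First I would fix the embedding \eqref{embedding-defn}, under which $H_{1,1}(\QQ)$ acts on $\QQ^4$ preserving the decomposition $\QQ^4 = W_1\oplus W_2$ into two symplectic planes (with the given sign conventions); write $v = v_1 + v_2$ accordingly. The invariant attached to a line $\QQ v$ is the pair $(\dim\langle v_1\rangle, \dim\langle v_2\rangle)\in\{(1,0),(0,1),(1,1)\}$, together with — in the $(1,1)$ case — a finer invariant measuring how $v$ sits relative to the two planes, on which $\GSp(2)\times\GSp(2)$ (with equal similitude) acts transitively once normalized. Concretely: if $v_2=0$, the line lies in $W_1$ and $\GSp(2)$ acts transitively on lines in a symplectic plane, giving one orbit; symmetrically $v_1=0$ gives a second orbit; and if both $v_1,v_2\neq0$, one uses the two $\SL(2)$–factors to move $v_1$ and $v_2$ each to a standard vector (the similitude constraint $\mu(g)=\mu(g')$ costs one degree of freedom, but rescaling $v$ by $\QQ^\times$ absorbs it), yielding a single orbit. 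I would then check that the three representatives $1$, $s_1$, $r$ indeed land in these three orbits: $1$ fixes $\ell_0\subset W_1$ (orbit $(1,0)$), $s_1$ swaps $e_1\leftrightarrow e_2$ sending $\ell_0$ to a line in $W_2$ (orbit $(0,1)$), and $r$ sends $e_1$ to $e_1 + e_2$ (or whatever the explicit computation gives), a vector with nonzero component in each plane (orbit $(1,1)$).

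For minimality and completeness I would argue that the three orbits are genuinely distinct — the invariant $(\dim\langle v_1\rangle,\dim\langle v_2\rangle)$ separates them, so no two of $1, s_1, r$ are in the same double coset — and that they exhaust $\mathbb{P}^3(\QQ)$ since every line has one of the three types. Equivalently, one can phrase this as a Bruhat-type decomposition: $H_{1,1}$ is (the image of) a Levi-type subgroup, and $Q\backslash\GSp(4)/H_{1,1}$ is a quotient of the relevant double coset space of Weyl groups, which for this pair has exactly three elements. I expect the main obstacle to be the bookkeeping in the $(1,1)$ case: verifying that a single $H_{1,1}(\QQ)$–orbit suffices there requires carefully tracking the similitude factor and the $\QQ^\times$–scaling of the projective point simultaneously, and confirming that $r$ is a valid representative (rather than some other vector needing a further representative). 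A clean way to finish is to compute the stabilizer in $H_{1,1}(\QQ)$ of each representative line and confirm, via the orbit–stabilizer count over $\QQ$, that the orbits fit together to cover all of $\mathbb{P}^3(\QQ)$ — but since $\QQ$ is infinite this is really just the transitivity statements above, done field-theoretically rather than by counting.
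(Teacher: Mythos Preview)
Your approach is sound and is essentially the standard argument behind the cited result (the paper itself gives no proof beyond the reference to Garrett, Prop.~2.4). Identifying $Q(\QQ)\backslash\GSp(4,\QQ)$ with $\PP^3(\QQ)$ and classifying $H_{1,1}(\QQ)$-orbits via the splitting $\QQ^4=W_1\oplus W_2$, $W_1=\langle e_1,e_3\rangle$, $W_2=\langle e_2,e_4\rangle$, is exactly right. Your transitivity claim for the mixed orbit also goes through cleanly: the stabilizer in $\GL(2,\QQ)$ of a nonzero vector surjects onto $\QQ^\times$ via the determinant, so one can always arrange $\det g_1=\det g_2$ without even invoking the projective rescaling.

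There is one bookkeeping slip in your verification step. With the paper's conventions the second column of any element of $Q$ is a multiple of $e_2$, so $Q$ is the stabilizer of $\QQ e_2$, not $\QQ e_1$; note $e_2\in W_2$. This harmlessly swaps your labels for the orbits of $1$ and $s_1$, but it also means your check of $r$ targets the wrong base line: you compute $re_1=e_1+e_2$, which is true but irrelevant. Computing instead $re_2=e_2$ shows that $r$ as printed actually lies in $Q(\QQ)$ (indeed it visibly has the required shape), so it represents the \emph{same} double coset as $1$, not the mixed one. This is almost certainly a transcription slip in the lemma---compare the genuine mixed representatives $\epsilon_{c,d}$ in Lemma~\ref{Lem:Heim}, which for $c\neq0$ send $e_2$ to $-ce_1+ae_2$ with nonzero $W_1$-component. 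Your method is correct; just run the verification with the correct line $\QQ e_2$ and flag (or repair) the discrepancy rather than assuming the printed $r$ works.
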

\begin{proof}
	The result is proved in Prop.~$ 2.4 $, \cite{garrett1987decomposition}. 
\end{proof}
Now by calculating $ (\eta^{-1} Q(\QQ) \eta \cap H_{1,1}(\QQ)) \backslash H_{1,1 }(\QQ)$, for each  $ \eta \in \{1,s_1,r\} $, the representatives of $ Q(\QQ) \backslash \GSp(4,\QQ) $ can be obtained. By multiplying with appropriate elements of $Q(\QQ)$ from left, the representatives over $ \ZZ $ can also be obtained. We have the following result (see Cor.~$ 2.2 $, \cite{heim1999pullbacks}).

\begin{lem}\label{Lem:Heim}
		The left cosets space $ Q(\QQ) \backslash \GSp(4,\QQ) $ has the following decomposition
		\begin{align}
		Q(\QQ) \backslash \GSp(4,\QQ) =  & \bigsqcup_{\substack{ \gamma \in \Gamma_{\infty} \backslash \SL(2,\ZZ )}}  (1,\gamma) \bigsqcup_{\substack{ \gamma \in \Gamma_{\infty} \backslash \SL(2,\ZZ )}}  s_1 (\gamma,1) \nonumber 
		 \bigsqcup_{\substack{ \gamma_1, \gamma_2 \in \Gamma_{\infty} \backslash \SL(2,\ZZ ) \\  c \in \NN, d \in \ZZ\backslash \{0\} \\ (c,d) =1}}  \epsilon_{c,d} (\gamma_1,\gamma_2),
		\end{align}
		where $ \epsilon_{c,d} =  \left[ \begin{matrix*}[r]
		d &  -c &  &  \\
		* & *  &  &  \\
		&   & * & * \\
		& &  c   & d
		\end{matrix*} \right] $.
\end{lem}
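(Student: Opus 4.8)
The statement to be proved is Lemma~\ref{Lem:Heim}, the explicit decomposition of the left coset space $Q(\QQ)\backslash\GSp(4,\QQ)$. The strategy is to combine the double coset decomposition of the previous lemma with a computation of stabilizers. Concretely, since
\[
Q(\QQ)\backslash\GSp(4,\QQ) = \bigsqcup_{\eta\in\{1,s_1,r\}} Q(\QQ)\backslash Q(\QQ)\eta H_{1,1}(\QQ),
\]
and $Q(\QQ)\backslash Q(\QQ)\eta H_{1,1}(\QQ)$ is in bijection with $(\eta^{-1}Q(\QQ)\eta\cap H_{1,1}(\QQ))\backslash H_{1,1}(\QQ)$, it suffices to identify, for each of the three representatives $\eta$, the subgroup $H_\eta := \eta^{-1}Q(\QQ)\eta\cap H_{1,1}(\QQ)$ of $H_{1,1}(\QQ)$ and then pick coset representatives for $H_\eta\backslash H_{1,1}(\QQ)$.

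First I would treat $\eta = 1$. Using the embedding \eqref{embedding-defn}, an element $((\mat{a_1}{b_1}{c_1}{d_1},\mat{a_2}{b_2}{c_2}{d_2}))$ lands in $Q(\QQ)$ (the Klingen parabolic, with the distinguished zero pattern in the second row/column apart from the $(2,2)$ slot) precisely when $c_1 = 0$, i.e.\ the first $\GL(2)$-factor lies in the upper-triangular Borel of $\GL(2)$; there is no constraint on the second factor. Hence $H_1\backslash H_{1,1}(\QQ)\cong \Gamma_\infty\backslash\SL(2,\ZZ)$ after descending to integral points (the Borel of $\GL(2,\QQ)$ intersected with $\SL(2,\ZZ)$, modulo $\pm$ unipotent, reduces to $\Gamma_\infty\backslash\SL(2,\ZZ)$), giving the first family $(1,\gamma)$. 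For $\eta = s_1$, conjugation by $s_1$ swaps the roles of the two $\GL(2)$-factors, so now the constraint falls on the second factor and one gets the family $s_1(\gamma,1)$. The nontrivial case is $\eta = r$: here one must compute $r^{-1}Q(\QQ)r\cap H_{1,1}(\QQ)$ explicitly by multiplying out the $4\times 4$ matrices. The result should be that an element of $H_{1,1}(\QQ)$, written as a pair of $\GL(2)$-matrices sharing a similitude factor, lies in this intersection iff its two lower-left entries $c_1, c_2$ are proportional with the ``ratio data'' matching the shape of $r$; scaling to be coprime integers $(c,d)$ and reducing the remaining freedom modulo the unipotent radicals yields $\gamma_1,\gamma_2\in\Gamma_\infty\backslash\SL(2,\ZZ)$ together with the pair $(c,d)$ with $c\in\NN$, $d\in\ZZ\setminus\{0\}$, $(c,d)=1$, and the representative takes the stated form $\epsilon_{c,d}$. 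I would verify that the $\epsilon_{c,d}$ are genuine coset representatives by checking they are pairwise inequivalent under left multiplication by $Q(\QQ)$ — this is where the coprimality and the sign/positivity normalization on $(c,d)$ are forced.

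The main obstacle I anticipate is the bookkeeping in the $\eta = r$ case: one has to pin down exactly which normalization of $(c,d)$ (why $c>0$ and $d\ne 0$, rather than $d>0$, and why the unipotent ambiguity collapses to two independent copies of $\Gamma_\infty\backslash\SL(2,\ZZ)$ rather than one) gives a \emph{minimal} system of representatives with no overcounting. This requires carefully tracking how $Q(\QQ)$ acts on the left and how $\Gamma_\infty$ (the $\pm$ upper-unipotent) acts on the right of each $\SL(2,\ZZ)$-factor, and confirming the two actions are independent. The remaining steps — disjointness of the three $\eta$-blocks (immediate from the previous lemma) and the passage from $\QQ$-points to $\ZZ$-points via left multiplication by suitable elements of $Q(\QQ)$ (clearing denominators, as indicated in the text preceding the lemma) — are routine. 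Since the lemma is quoted as Cor.~2.2 of \cite{heim1999pullbacks}, I would also cross-check my computed representatives against that reference to make sure conventions agree.
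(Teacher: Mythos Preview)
Your approach is exactly the one the paper outlines (and attributes to Heim, Cor.~2.2): use the three double coset representatives $1,s_1,r$, compute each stabilizer $\eta^{-1}Q(\QQ)\eta\cap H_{1,1}(\QQ)$, and then clear denominators by left-multiplying with $Q(\QQ)$ to obtain integral representatives. One small slip to fix when you execute the $\eta=1$ case: under the embedding \eqref{embedding-defn} the $(4,2)$-entry of the image is $c_2$, so the Klingen condition forces $c_2=0$ (the \emph{second} factor lies in the Borel), not $c_1=0$ --- this is in fact consistent with the representatives $(1,\gamma)$ you wrote down, and the $s_1$ case then swaps to the first factor.
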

For convenience we will denote the three terms in the  Lemma \ref{Lem:Heim} as $ T_1 $, $ T_{s_1} $ and $ T_r $ \\
	Before  proceeding further, we note a simple matrix identity which will be useful later.
	\begin{equation}{\label{matrix_identity}}
	\bmat{a}{b}{c}{d}\bmat{1}{x_j}{}{1} \bmat{\sqrt{y_j}}{}{}{\sqrt{y_j}^{-1}} = \bmat{1}{x_j^{\prime}}{}{1} \bmat{\sqrt{y_j^{\prime}}}{}{}{\sqrt{y_j^{\prime}}^{-1}} r(\theta_j) 
	\end{equation}
	where, 
	\begin{equation*}
	\bmat{a}{b}{c}{d} \in \SL(2,\ZZ), \, \tau_j = x_j+iy_j, \, x_j^{\prime} + iy_j^{\prime} = \frac{a \tau_j + b}{c \tau_j + d},  \text{ and } \exp(i \theta_j) = \frac{c \bar{\tau_j} + d }{|c \tau_j + d|}.
	\end{equation*}
	We also note an explicit Iwasawa decomposition for $\bmat{a}{b}{c}{d} \in  SL(2,\RR) $ for a later use. We have
	\begin{equation}
	\bmat{a}{b}{c}{d} = \bmat{1}{ q}{}{1} \bmat{r^{-1}}{}{ }{r} \bmat{\cos \theta}{-\sin \theta}{\sin \theta}{\cos \theta} 
	\end{equation}
	where,
	\begin{align*}
	r^2 &= c^2 + d^2, r \sin \theta =c , r \cos \theta = d, 
	q = \frac{ac+bd}{c^2 + d^2}.
	\end{align*}
	
Now we prove our main theorem on pullback of Klingen Eisenstein series of level $ N $ with respect to Siegel congruence subgroup. 
\mainpullback*

\begin{proof}
	First of all we will explicitly define the global distinguished vector $ {\Phi} $.  
	Let $ \Pi $ denote the automorphic representation  $  |\cdot|^{s} \rtimes |\cdot|^{\frac{-s}{2}} \pi $  of 
	$ \GSp(4,\AA) $. We know from the tensor product theorem that  
	\begin{align}
	\Pi \cong \bigotimes_{p\leq \infty} \Pi_p
	\end{align}
	where almost all $ \Pi_p $ are unramified. For each prime $ p $, we select a local distinguished vector
	$ \Phi_p \in \Pi_p $, which would then yield a global distinguished vector 
	\begin{equation}{\label{defglobaldistinguishedvectorpara1}}
	{\Phi} \cong {\Phi}_\infty \otimes \bigotimes_{p < \infty}  {\Phi}_p   \, .
	\end{equation}
	Similarly, it follows from the tensor product theorem that
	\begin{align*}
	\phi \cong {\phi}_\infty \otimes \bigotimes_{p < \infty}  {\phi}_p. 
	\end{align*}
	Here, since $ \phi$ is the adelic cusp form associated with $ f \in S_k^1(\Gamma_0^2(N))$, it is clear that for every finite prime $ {p} $ with $ p \nmid N $, $ \phi_{p}$ is a spherical vector and for each prime $ {p} $ with $ p | N $, $ \phi_{{p}} $ is a $$\K2({p}^{n_p}) := \left \{  \bmat{a}{b}{c}{d} \in \GL(2,\ZZ_{p}) \colon c \in p^{n_p} \ZZ_{p},\, d \in 1 + {p}^{n_p} \ZZ_{p} \right\} $$ invariant vector. 
	Next, we describe our choices for the local distinguished vectors. \\ 
	{\textbf{Archimedean distinguished vector}}\\ 
	We pick a distinguished vector ${\Phi}_{\infty} $ such that 
	it is of the minimal $ K $-type $ (k,k) $. It was shown in Proposition $ 5.2.7 $, \cite{shukla2018klingen}, (which is a special case of Theorem $ 10.2 $, \cite{MR2587308}), that such a vector exists in $ \Pi_{\infty} $.
	More explicitly we define  $ \Phi_{\infty} $ as follows
	\begin{equation}{\label{defarchimedeandistinguishedvectorpara2}}
	{\Phi}_{\infty}(h_{\infty} k_{\infty}) :=  \det(j(k_\infty,I))^{-k} |t^2(ad -bc)^{-1}|_{} \,|t|_{}^{s}\, |ad -bc|_{}^{-\frac{s}{2}} \pi_{1,\infty}(\bmat{a}{b}{c}{d}) \,\phi_{\infty}
	\end{equation}
	where $$ h_{\infty} = \left[ \begin{matrix*}[r]
	a &   & b & * \\
	* &t & * & * \\
	c  &   & d & * \\
	& &   & t^{-1}(ad -bc)
	\end{matrix*}  \right] \in Q(\RR), $$ and $ k_{\infty}  \in \Kfun_1$, the maximal standard compact subgroup of $ \GSp(4,\RR) $. It can be checked that ${\Phi}_{\infty} $ is well-defined.\\ 
	\noindent \textbf{Unramified non-archimedean distinguished vectors} \\ 
	For all primes $ q $ such that $ q \nmid N $, we pick unramified local distinguished vectors such that,
	\begin{equation}{\label{defglobaldistinguishedvectorpara2}}
	{\Phi}_q(1) := \phi_q \, .
	\end{equation}    
	This means we have 
	\begin{equation}{\label{defglobaldistinguishedvectorpara2_new}}
	{\Phi}_q(g) = {\Phi}_q(h_q k_q) = |t^2(ad -bc)^{-1}|_{q} \,|t|_{q}^{s}\, |ad -bc|_{q}^{-\frac{s}{2}} \pi_{q}(\bmat{a}{b}{c}{d}) \phi_q, 
	\end{equation} 
	where using the Iwasawa decomposition $ g \in  \GSp(4,\QQ_q)$ is written as $ g = h_q k_q $  with 
	$$ h_q = \left[ \begin{matrix*}[r]
	a &   & b & * \\
	* &t & * & * \\
	c  &   & d & * \\
	& &   & t^{-1}(ad -bc)
	\end{matrix*}  \right] \in Q(\QQ_q) $$ and $ k_q  \in \GSp(4,\ZZ_q) $. It is easy to check that $ {\Phi}_q $ is well-defined.\\ 
	\noindent \textbf{Ramified non-archimedean distinguished vector} \\ 
	For each finite prime $ p $ such that $ p | N $ we select a $ \Gamma_0^4(p^{n_p}) $ invariant vector as distinguished vector.  We note that the existence of such a vector that is supported only on $ Q(\QQ_{p})1 \Sifun(p^{n_p})  $ follows from $ \Eq(7.2) $ and the discussion preceding that in \cite{shukla2018klingen}. Therefore, our distinguished vector $ \Phi_p $  is zero on all double cosets other than $Q(\QQ_{p})1 \Sifun(p^{n_p}) $ and on $Q(\QQ_{p})1 \Sifun(p^{n_p}) $ it is given by
	\begin{align}{\label{DeflocalcongruencedistinguishedvectorPrincipalSeries}}
	{\Phi}_{p}  \left( \left[ \begin{matrix*}[r]
	a &   & b & * \\
	* &t & * & * \\
	c  &   & d & * \\
	& &   & t^{-1}(ad -bc)
	\end{matrix*}  \right] 1 \,  \kappa   \right)  = &|t^2(ad -bc)^{-1}|_{p} \,|t|_{p}^{s}\,  |ad -bc|_{p}^{-\frac{s}{2}} \pi_{p}(\bmat{a}{b}{c}{d}) \phi_{p}
	\end{align}  where, $ \kappa \in \Sifun({p^{n_p}})$,$ \bmat{a}{b}{c}{d} \in \GL(2,\QQ_{p}) $  and  $ \phi_{p} \in   \pi_{p}$ is such that 
	$$ \pi_{p}(\bmat{a}{b}{c}{d}) \phi_{p} =  \phi_{p}  \text{ for all }  \bmat{a}{b}{c}{d} \in \bmat{\ZZ_{p}^{\times}}{\ZZ_{p}}{p^{n_p} \ZZ_{p}}{\ZZ_{p}^{\times}}. $$ 
	
	Now we compute the contribution of the three terms $ T_1 $, $ T_{s_1} $ and $ T_r $ in Lemma \ref{Lem:Heim}. \\

\underline{Contribution of $ T_1 $:} The contribution of $ T_1 $ term is given by
	
	\begin{align*}
I(T_1) = \sum_{\substack{\gamma_1 = \, (1,\gamma), \, \gamma \in \Gamma_{\infty} \backslash \SL(2,\ZZ )}}\; (\Phi (\gamma_1 g,s))(1)
	= \sum_{\substack{  \gamma \in\Gamma_{\infty}  \backslash \SL(2,\ZZ )}}\; \; (\Phi (  ( g_1,\gamma g_2),s))(1).
	\end{align*}
	Now, we have
	\begin{align*}
I(T_1) &= \sum_{\substack{  \gamma \in \Gamma_{\infty}  \backslash \SL(2,\ZZ )   }}\; \; (\Phi (  ( g_1,\gamma g_2),s))(1)\\
	& = \sum_{\substack{  \gamma \in \Gamma_{\infty}  \backslash \SL(2,\ZZ)   }}\; \;(|\det(g_1)|^{-\frac{s+2}{2}}  \pi(g_1) \Phi ( (1,(\gamma g_{2,\infty},\gamma,\cdots) ),s))(1) \\
	& = \sum_{\substack{  \gamma  \in \Gamma_{\infty}  \backslash \SL(2,\ZZ)   }}\; \pi(g_1) \
 \left( \Phi_{\infty} ((1,\gamma g_{2,\infty}),s)  \bigotimes_{p | N}  \Phi_p((1,\gamma),s) \bigotimes_{q < \infty, \, q \nmid   N} \Phi_q((1,\gamma),s)\right)(1).
	\end{align*}
	Since $ \Phi_p $ is supported only on $ Q(\QQ_p) 1 \Si{n_p} $ for each $ p |N $, it follows from Lemma 7.2, \cite{schmidt2018klingen}, which essentially depends on the double coset decompositions of $ Q(\QQ) \backslash \GSp(4,\QQ) / \Gamma_0(p^{n_p}) $ as computed in \cite{shukla2017codimensions}, that only those $ \gamma $ will contribute in the summation for which 
	$$ (1,\gamma) \in \Q(\QQ) \backslash   \bigcap_{p | N}  Q(\QQ) \, \Gamma_0^4({p^{n_p}}) = \Q(\QQ) \backslash   Q(\QQ) \, \Gamma_0^4(N) = (Q(\QQ) \cap \Gamma_0^4(N)) \backslash \Gamma_0^4(N) .$$ 
	Here we note that the first equality above is perhaps a little more subtle than it might appear at first (for a proof see Proposition~$ 7.2.1 $, \cite{shukla2018klingen}).
	It follows that in the above summation one can restrict to $ \gamma \in \Gamma_{\infty}  \backslash \Gamma_0^2(N) $. 
	
	Further, if $ q \nmid N $ then $ \Phi_q((1,\gamma),s) = \Phi_q(1,s) = \phi_q $, as  $ \Phi_q $ is unramified and $ (1,\gamma) \in \GSp(4, \ZZ_q ) $.  Also, if $ p|N $ then from the definition of $ \Phi_p $, we have $ \Phi_p((1,\gamma),s) = \Phi_p(1,s) = \phi_p  $. Therefore, 
	\begin{align*}
	I(T_1)  & = \sum_{\substack{  \gamma  = \mat{a}{b}{c}{d} \in  \Gamma_{\infty} \backslash \Gamma_0^2(N) }}\; \;  \pi(g_1) 
	\left( \Phi_{\infty} ((1,\gamma g_{2,\infty}),s)  \bigotimes_{p < \infty}  \phi_p \right)(1) \\
	& = \sum_{\substack{  \mat{a}{b}{c}{d} \in \Gamma_{\infty}  \backslash \Gamma_0^2(N) }}\; \; {y_2^{\prime}}^{\frac{s+2}{2}} e^{ik \theta_2} \pi(g_1) \phi(1) \\
	& = \sum_{\substack{  \mat{a}{b}{c}{d} \in \Gamma_{\infty}  \backslash \Gamma_0^2(N) }}\; \; {y_2^{\prime}}^{\frac{s+2}{2}} e^{ik \theta_2} \,  \phi(g_1)  \\
	& = \sum_{\substack{ \mat{a}{b}{c}{d} \in \Gamma_{\infty}  \backslash \Gamma_0^2(N)  }}\; \; \frac{y_2^{\frac{s+2}{2}}}{|c_2 \tau +d_2|^{s+2}} \left(\frac{c_2 \bar{\tau} + d_2 }{|c_2 \tau_2 + d_2|}\right)^k  \,  \phi(g_1)   \\ \\
	& \qquad \qquad \qquad \text{(we have used $ \eqref{matrix_identity} $ in the previous step)} \\ \\
	& = \sum_{\substack{  \mat{a}{b}{c}{d} \in \Gamma_{\infty} \backslash \Gamma_0^2(N) }}\; \; \frac{y_2^{\frac{s+2}{2}}}{|c_2 \tau +d_2|^{s+2-k}(c_2 \tau +d_2)^k}  \,  \phi(g_1) \\
	& = (y_1 y_2)^{\frac{s+2}{2}} E_1(s,k,N,\tau_2) \,  f(\tau_1). 
	\end{align*}
\underline{Contribution of $ T_{s_1} $:} A calculation similar to the previous one shows that the contribution of  $ T_{s_1} $ term is given by
\begin{align*}
I(T_{s_1})	& = (y_1 y_2)^{\frac{s+2}{2}} E_1(s,k,N,\tau_1) \,  f(\tau_2). 
\end{align*}
\underline{Contribution of $ T_{r} $:} Now, it only remains to find the contribution of $ T_{r} $.
\begin{align*}
& I(T_r) = \sum_{\substack{ \gamma_1, \gamma_2 \in \Gamma_{\infty} \backslash \SL(2,\ZZ ) \\  c \in \NN, d \in \ZZ\backslash \{0\} \\ (c,d) =1}} (\Phi (\epsilon_{c,d}  (\gamma_1 g_1,\gamma_2 g_2),s))(1) \\
	& = \sum_{\substack{  \substack{ \gamma_1, \gamma_2 \in \Gamma_{\infty} \backslash \SL(2,\ZZ ) \\  c \in \NN, d \in \ZZ\backslash \{0\} \\ (c,d) =1}}}\; 
	\left( \Phi_{\infty} (\epsilon_{c,d} (\gamma_1 g_{1,\infty},\gamma_2 g_{2,\infty}),s)  \bigotimes_{p | N}  \Phi_p((\gamma_1,\gamma_2),s) \bigotimes_{q < \infty, \, q \nmid   N} \Phi_q((\gamma_1,\gamma_2),s)\right)(1).
	\end{align*}
	Since $ \Phi_p $ is supported only on $ Q(\QQ_p) 1 \Si{n_p} $, we conclude, as we did earlier while determining the contribution of $ T_1 $,  that only those $ (\gamma_1,\gamma_2 )$ will contribute in the summation for which 
	$$ (\gamma_1,\gamma_2) \in \Q(\QQ) \backslash   \bigcap_{p | N}  Q(\QQ) \, \Gamma_0^4({p^{n_p}}) = \Q(\QQ) \backslash   Q(\QQ) \, \Gamma_0^4(N) = (Q(\QQ) \cap \Gamma_0^4(N)) \backslash \Gamma_0^4(N) .$$	
		
	Therefore, in the above summation one can restrict to summing over all  $ \gamma_1,\gamma_2 \in \Gamma_{\infty}  \backslash \Gamma_0^2(N) $.
		Next, if $ q \nmid N $ then $ \Phi_q((\gamma_1,\gamma_2),s) = \Phi_q(1,s) = \phi_q $, as  $ \Phi_q $ is unramified and $ (\gamma_1,\gamma_2) \in \GSp(4, \ZZ_q ) $.  Also from the definition of $ \Phi_p $, we have $ \Phi_p((\gamma_1,\gamma_2),s) = \Phi_p(1,s) = \phi_p  $. Therefore, 
\begin{align*}
		I(T_r)  & = \sum_{\substack{ c \in \NN, d \in \ZZ\backslash \{0\} \\ (c,d) =1}} \sum_{\substack{  \gamma_i  = \mat{a_i}{b_i}{c_i}{d_i} \in  \Gamma_{\infty} \backslash \Gamma_0^2(N) }}\; \;  
		\left( \Phi_{\infty} (\epsilon_{c,d}(\gamma_1 g_{1,\infty},\gamma_2 g_{2,\infty}),s)  \bigotimes_{p < \infty}  \phi_p \right)(1). \\
\end{align*}
Next using $ \eqref{matrix_identity} $, we write
\begin{align*}
\mat{a_j}{b_j}{c_j}{d_j} \bmat{1}{x_j}{}{1} \bmat{\sqrt{y_j}}{}{}{\sqrt{y_j}^{-1}} = \bmat{1}{x_j^{\prime}}{}{1} \bmat{\sqrt{y_j^{\prime}}}{}{}{\sqrt{y_j^{\prime}}^{-1}} r(\theta_j), 
\end{align*}
with 
\begin{equation*}
\bmat{a_j}{b_j}{c_j}{d_j} \in \SL(2,\ZZ), \, \tau = x_j+iy_j, \, x_j^{\prime} + iy_j^{\prime} = \frac{a_j \tau_j + b_j}{c_j \tau_j + d_j},  \text{ and } \exp(i \theta_j) = \frac{c_j \bar{\tau_j} + d_j }{|c_j \tau + d_j|}.
\end{equation*}

Next we compute $ \Phi_{\infty} (\epsilon_{c,d}(\gamma_1 g_{1,\infty},\gamma_2 g_{2,\infty}),s)$.
\begin{align*}
&\Phi_{\infty} \left(\epsilon_{c,d}(\gamma_1 g_{1,\infty},\gamma_2 g_{2,\infty}),s \right) \\
&= \Phi_{\infty} \left(\epsilon_{c,d} \left(\bmat{\sqrt{y_1^{\prime}}}{x_1^\prime \sqrt{y_1^{\prime}}^{-1}}{}{\sqrt{y_1^{\prime}}^{-1}} r(\theta_1),\bmat{\sqrt{y_2^{\prime}}}{x_2^{\prime} \sqrt{y_2^{\prime}}^{-1}}{}{\sqrt{y_2^{\prime}}^{-1}} r(\theta_2) \right),s\right)\\
&= \Phi_{\infty} \left(\epsilon_{c,d} \left(\bmat{\sqrt{y_1^{\prime}}}{x_1^{\prime} \sqrt{y_1^{\prime}}^{-1}}{}{\sqrt{y_1^{\prime}}^{-1}},\bmat{\sqrt{y_2^{\prime}}}{x_2^{\prime} \sqrt{y_2^{\prime}}^{-1}}{}{\sqrt{y_2^{\prime}}^{-1}} \right),s \right) \, e^{ik \theta_1 }  e^{ik \theta_2 }\\
&= \Phi_{\infty} \left(\epsilon_{c,d} \left(\bmat{\sqrt{y_1^{\prime}}}{x_1^{\prime} \sqrt{y_1^{\prime}}^{-1}}{}{\sqrt{y_1^{\prime}}^{-1}},\bmat{\sqrt{y_2^{\prime}}}{x_2^{\prime} \sqrt{y_2^{\prime}}^{-1}}{}{\sqrt{y_2^{\prime}}^{-1}} \right),s \right) \, \left(\frac{c_1 \bar{\tau_1} + d_1 }{|c_1 \tau_1 + d_1|}\right)^k  \left(\frac{c_2 \bar{\tau_2} + d_2 }{|c_2 \tau_2 + d_2|}\right)^k.
\end{align*}

Let $ \alpha_j = \sqrt{y^{\prime}_j} $ and $ \beta_j = x_i^{\prime} \sqrt{y_i^{\prime}}^{-1} $. Then,
\begin{align*}
& \Phi_{\infty} (\epsilon_{c,d} \left(\bmat{\sqrt{y_1^{\prime}}}{x_1^{\prime} \sqrt{y_1^{\prime}}^{-1}}{}{\sqrt{y_1^{\prime}}^{-1}},\bmat{\sqrt{y_2^{\prime}}}{x_2^{\prime} \sqrt{y_2^{\prime}}^{-1}}{}{\sqrt{y_2^{\prime}}^{-1}} \right), s) \\
& = 
\Phi_{\infty}\left(  \left[ \begin{matrix*}[r]
d &  -c &  &  \\
-b  & a  &  &  \\
&   & a & b \\
& &  c   & d
\end{matrix*} \right]   \left[ \begin{matrix*}[r]
\alpha_1 &   & \beta_1  &  \\
& \alpha_2  &  & \beta_2  \\
&   & \alpha_1^{-1} &\\
& &     & \alpha_2^{-1}
\end{matrix*} \right],s \right) \\
&	= \Phi_{\infty}\left(  \left[ \begin{matrix*}[r]
\alpha_1 d & -\alpha_{2} c & \beta_{1} d & -\beta_2 c \\
-\alpha_1 b &  \alpha_{2}a & - \beta_1 b & \beta_2 a \\
&  & \frac{a}{\alpha_1} & \frac{b}{\alpha_{2}} \\
&  & \frac{c}{\alpha_1} & \frac{d}{\alpha_{2}}
\end{matrix*} \right], s \right) \\
&	= \Phi_{\infty}\left( 
\left[ \begin{matrix*}[r]
1  &   &  &  \\
& \alpha_1 \alpha_{2} & & \\
& & 1& \\
& & & (\alpha_1 \alpha_{2})^{-1} \\
\end{matrix*} \right]  
\left[ \begin{matrix*}[r]
\alpha_1 d & -\alpha_{2} c & \beta_{1} d & -\beta_2 c \\
\frac{-b}{\alpha_2} &  \frac{a}{\alpha_1} & \frac{- \beta_1 b}{\alpha_1 \alpha_2} & \frac{\beta_2 a}{\alpha_1 \alpha_2} \\
&  & \frac{a}{\alpha_1} & \frac{b}{\alpha_{2}} \\
&  & {c}{\alpha_2} & {d}{\alpha_{1}}
\end{matrix*} \right], s \right) \\
&	= |\alpha_1 \alpha_2 |^{s+2} \,\,\Phi_{\infty}\left( \bmat{A \invtr}{B}{}{A}, s  \right),
\end{align*}
where $\ds A = \bmat{\frac{a}{\alpha_1}}{\frac{b}{\alpha_2}}{c\alpha_2}{d\alpha_1} $, $\ds B = \bmat{d\beta_1}{-c\beta_2 }{\frac{- \beta_1 b}{\alpha_1 \alpha_2}}{\frac{\beta_2 a}{\alpha_1 \alpha_2}} $.\\
Now,
\begin{align*}
\Phi_{\infty}\left( \bmat{A \invtr}{B}{}{A}  \right) &= \Phi_{\infty}\left( \bmat{1}{BA^{-1}}{}{1}\bmat{A \invtr}{}{}{A}, s  \right)\\
&= \Phi_{\infty}\left( \bmat{1}{BA^{-1}}{}{1}\bmat{X \invtr}{}{}{X} \bmat{R^{-1}}{}{}{R} \bmat{r(\theta)}{}{}{r(\theta)}, s \right),
\end{align*}
where $ X = \bmat{1}{q}{}{1}  $, $R= \bmat{r^{-1}}{}{}{r}  $,
with $\ds q = \frac{\alpha_2^2 ac +\alpha_1^2 bd}{\alpha_1 \alpha_2(\alpha_2^2 c^2 + \alpha_1^2 d^2)} $ and $ \ds r^2 = {\alpha_2^2 c^2 + \alpha_1^2 d^2}  $.\\
It follows from the definition of $ \Phi_{\infty} $ that it is invariant under the action of $ \pi(\bmat{r(\theta)}{}{}{r(\theta)})  $.
Therefore, we only need to compute
\begin{align*}
\Phi_{\infty}\left( \bmat{1}{BA^{-1}}{}{1}\bmat{X \invtr}{}{}{X} \bmat{R^{-1}}{}{}{R} ,s \right) &= \Phi_{\infty}\left( \bmat{X \invtr}{}{}{X} \bmat{1}{X^{T} BA^{-1}X }{}{1} \bmat{R^{-1}}{}{}{R}, s  \right) \\
& = \Phi_{\infty}\left( \bmat{1}{X^{T} BA^{-1}X }{}{1} \bmat{R^{-1}}{}{}{R}, s  \right).
\end{align*}
A simple calculation yields 
\begin{align*}
X^{T} BA^{-1}X = \left[\begin{array}{rr}
\alpha_{2} \beta_2 c^{2} + \alpha_1 \beta_1 d^{2} & * \\
* & *
\end{array}\right].
\end{align*}
Therefore,
\begin{align*}
\Phi_{\infty}\left( \bmat{1}{X^{T} BA^{-1}X }{}{1} \bmat{R^{-1}}{}{}{R},s  \right)
& =  |r|^{-(s+2)}\pi_{\infty}\left( \bmat{1}{\alpha_{2} \beta_2 c^{2} + \alpha_1 \beta_1 d^{2}}{}{1} \bmat{r}{}{}{r^{-1}} \right) \Phi_{\infty}(1,s) \\
& = |r|^{-(s+2)} \pi_{\infty}\left( \bmat{1}{\alpha_{2} \beta_2 c^{2} + \alpha_1 \beta_1 d^{2}}{}{1} \bmat{r}{}{}{r^{-1}} \right) \phi_{\infty}.
\end{align*}	
Now we obtain,
\begin{align*}
I(T_r)  & = \sum_{\substack{ c \in \NN, d \in \ZZ\backslash \{0\} \\ (c,d) =1}} \sum_{\substack{  \gamma_i  = \mat{a_i}{b_i}{c_i}{d_i} \in  \Gamma_{\infty} \backslash \Gamma_0^2(N) }}\; \;  
\left( \Phi_{\infty} (\epsilon_{c,d}(\gamma_1 g_{1,\infty},\gamma_2 g_{2,\infty}),s)  \bigotimes_{p < \infty}  \phi_p \right)(1) \\
& = \sum_{\substack{ c \in \NN, d \in \ZZ\backslash \{0\} \\ (c,d) =1}} \sum_{\substack{  \gamma_i  = \mat{a_i}{b_i}{c_i}{d_i} \in  \Gamma_{\infty} \backslash \Gamma_0^2(N) }}\; \;  
|\alpha_1 \alpha_2 |^{s+2} |r|^{-(s+2)} \left(\frac{c_1 \bar{\tau_1} + d_1 }{|c_1 \tau_1 + d_1|}\right)^k  \left(\frac{c_2 \bar{\tau_2} + d_2 }{|c_2 \tau_2 + d_2|}\right)^k  \\ 
& \qquad \qquad \qquad \qquad \qquad \qquad \qquad \pi_{\infty}\left( \bmat{1}{\alpha_{2} \beta_2 c^{2} + \alpha_1 \beta_1 d^{2}}{}{1} \bmat{r}{}{}{r^{-1}} \right) \phi_{1}(1) \\
& = \sum_{\substack{ c \in \NN, d \in \ZZ\backslash \{0\} \\ (c,d) =1}} \sum_{\substack{  \gamma_i  = \mat{a_i}{b_i}{c_i}{d_i} \in  \Gamma_{\infty} \backslash \Gamma_0^2(N) }}\; \;  
(y_1 y_2)^{\frac{s+2}{2}}	j(\gamma_1,\tau_1)^{-(s+2)} j(\gamma_2,\tau_2)^{-(s+2)}  \\ 
& \qquad \qquad \qquad \qquad \qquad \qquad   |r|^{-(s+2)} \phi_1 \left( \bmat{1}{\alpha_{2} \beta_2 c^{2} + \alpha_1 \beta_1 d^{2}}{}{1} \bmat{r}{}{}{r^{-1}} \right)  \\
& = \sum_{\substack{ c \in \NN, d \in \ZZ\backslash \{0\} \\ (c,d) =1}} \sum_{\substack{  \gamma_i  = \mat{a_i}{b_i}{c_i}{d_i} \in  \Gamma_{\infty} \backslash \Gamma_0^2(N) }}\; \;  
(y_1 y_2)^{\frac{s+2}{2}}	\, \, j(\gamma_1,\tau_1)^{-k} j(\gamma_2,\tau_2)^{-k}   f\left( c^2 \gamma_2 \langle\tau_2 \rangle + d^2 \gamma_1 \langle \tau_1 \rangle \right) \\
& = 2 \sum_{\substack{ c, d \, \in \NN,  \\ (c,d) =1}} \sum_{\substack{  \gamma_i  = \mat{a_i}{b_i}{c_i}{d_i} \in  \Gamma_{\infty} \backslash \Gamma_0^2(N) }}\; \;  
(y_1 y_2)^{\frac{s+2}{2}}	\, \, j(\gamma_1,\tau_1)^{-k} j(\gamma_2,\tau_2)^{-k}   f\left( c^2 \gamma_2 \langle\tau_2 \rangle + d^2 \gamma_1 \langle \tau_1 \rangle \right). 
\end{align*}
This completes the proof.
\end{proof}

\section{Pullback of Klingen Eisenstein series with respect to Paramodular subgroup} \label{Sect:paramodular}
In this section we obtain a pullback formula for Klingen Eisenstein series with respect to paramodular subgroup. The proof essentially proceeds along the line of the proof of pullback of  Klingen Eisenstein series with respect to Siegel congruence subgroup, presented earlier. The difference here is mainly in the selection of appropriate local vectors $ \Phi_p $. We note an interesting feature of Klingen Eisenstein series with respect to paramodular subgroup considered in \cite{shukla2018klingen}. Indeed, if one starts with level $ N $ elliptic cusp form $ f $ then the corresponding Klingen Eisenstein series obtained using the paramodular lift was of level $ N^2 $ (See Theorem 6.2.1, \cite{shukla2018klingen}). Unsurprisingly, the same feature is reflected in the pullback result for paramodular Klingen lift that we obtain in this paper.

\mainpullbackparamodular*

\begin{proof}
	We begin by explicitly defining the global distinguished vector $ {\Phi} $.  
	Let $ \Pi $ denote the automorphic representation  $  \chi^{-1} \, |\cdot|^{s} \rtimes |\cdot|^{\frac{-s}{2}} \pi $  of 
	$ \GSp(4,\AA) $. We know from the tensor product theorem that  
	\begin{align}
	\Pi \cong \bigotimes_{p\leq \infty} \Pi_p
	\end{align}
	where almost all $ \Pi_p $ are unramified. For each prime $ p $, we pick a local distinguished vector
	$ \Phi_p \in \Pi_p $, which would then yield a global distinguished vector 
	\begin{equation}{\label{defglobaldistinguishedvectorpara1}}
	{\Phi} \cong {\Phi}_\infty \otimes \bigotimes_{p < \infty}  {\Phi}_p   \, .
	\end{equation}
	Next, we describe our choices for the local distinguished vectors. \\ 
	{\textbf{Archimedean distinguished vector}}\\ 
	We pick a distinguished vector ${\Phi}_{\infty} $ such that 
	it is of the minimal $ K $-type $ (k,k) $. 
	More explicitly we define  $ \Phi_{\infty} $ as follows
	\begin{equation}{\label{defarchimedeandistinguishedvectorpara2}}
	{\Phi}_{\infty}(h_{\infty} k_{\infty}) :=  \det(j(k_\infty,I))^{-k} |t^2(ad -bc)^{-1}|_{} \,|t|_{}^{s}\, \chi^{-1}(t)\, |ad -bc|_{}^{-\frac{s}{2}} \pi_{\infty}(\bmat{a}{b}{c}{d}) \,\phi_{\infty}
	\end{equation}
	where $$ h_{\infty} = \left[ \begin{matrix*}[r]
	a &   & b & * \\
	* &t & * & * \\
	c  &   & d & * \\
	& &   & t^{-1}(ad -bc)
	\end{matrix*}  \right] \in Q(\RR) $$ and $ k_{\infty}  \in \Kfun_1$, the maximal standard compact subgroup of $ \GSp(4,\RR) $. It can be checked that ${\Phi}_{\infty} $ is well-defined.\\ 
	\noindent \textbf{Unramified non-archimedean distinguished vectors} \\ 
	For all primes $ q $ such that $ q \nmid N $, we pick unramified local distinguished vectors such that,
	\begin{equation}{\label{defglobaldistinguishedvectorpara2}}
	{\Phi}_q(1) := \phi_q \, .
	\end{equation}    

\noindent \textbf{Ramified non-archimedean distinguished vectors} \\
For each finite prime $ p $ such that $ p | N $, we pick a $ \para_p(p^{n_p}) $ invariant vector as distinguished vector.  We note that the existence of such a vector, and that it is supported only on $ Q(Q_p ) L_{n_p} \para_p(p^{2 n_p}) $, follows from the proof of the Theorem $ 5.4.2 $ in [30]. So,
our distinguished vector is given by
\begin{align}{\label{DeflocalcongruencedistinguishedvectorPrincipalSeriesPara}}
	{\Phi}_{p}  \left( \left[ \begin{matrix*}[r]
	a &   & b & * \\
	* &t & * & * \\
	c  &   & d & * \\
	& &   & t^{-1}(ad -bc)
	\end{matrix*}  \right]  L_{n_p} \,  \kappa   \right)  = &|t^2(ad -bc)^{-1}|_{p} \,|t|_{p}^{s}\, \chi_p^{-1}(t) \, |ad -bc|_{p}^{-\frac{s}{2}} \pi_{p}(\bmat{a}{b}{c}{d}) \phi_{p},
\end{align}  where, $ \kappa \in \para_p({p^{2n_p}})$,$ \bmat{a}{b}{c}{d} \in \GL(2,\QQ_{p}) $  and  $ \phi_{p} \in   \pi_{p}$ is a local newform of level $ n_p $.
	Now we compute the contribution of the three terms $ T_1 $, $ T_{s_1} $ and $ T_r $ in Lemma \ref{Lem:Heim}. \\
	We have
	\begin{align*}
	I(T_1) &= \sum_{\substack{\gamma_1 = \, (1,\gamma), \, \gamma \in \Gamma_{\infty} \backslash \SL(2,\ZZ )}}\; (\Phi (\gamma_1 g,s))(1) \\
    &= \sum_{\substack{  \gamma \in\Gamma_{\infty}  \backslash \SL(2,\ZZ )}}\; \; (\Phi (  ( g_1,\gamma g_2),s))(1) \\
	&=	\sum_{\substack{  \gamma \in \Gamma_{\infty}  \backslash \SL(2,\ZZ )   }}\; \; (\Phi (  ( g_1,\gamma g_2),s))(1)\\
	& = \sum_{\substack{  \gamma \in \Gamma_{\infty}  \backslash \SL(2,\ZZ)   }}\; \;(|\det(g_1)|^{-\frac{s+2}{2}}  \pi(g_1) \Phi ( (1,(\gamma g_{2,\infty},\gamma,\cdots) ),s))(1) \\
	& = \sum_{\substack{  \gamma  \in \Gamma_{\infty}  \backslash \SL(2,\ZZ)   }}\; \pi(g_1) \
	\left( \Phi_{\infty} ((1,\gamma g_{2,\infty}),s)  \bigotimes_{p | N}  \Phi_p((1,\gamma),s) \bigotimes_{q < \infty, \, q \nmid   N} \Phi_q((1,\gamma),s)\right)(1).
	\end{align*}
	Since $ \Phi_p $ is supported only on $ Q(\QQ_p) L_{p^{n_p}} \para_p({p^{2n_p}}) $, using Lemma 5.1, \cite{schmidt2018klingen}, we conclude that only those $ \gamma $ will contribute in the summation for which 
	$$ (1,\gamma) \in \Q(\QQ) \backslash   \bigcap_{p | N}  Q(\QQ) \,L_{p^{n_p}} \Kfun({p^{2n_p}}) = \Q(\QQ) \backslash   Q(\QQ) \,L_N\, \Kfun(N^2) = L_N \, D(N),$$ 
	where $D(N)$ is a set of representatives for $\Lfun_N^{-1}Q(\Q)\Lfun_N\cap\Kfun(N^2)\backslash\Kfun(N^2)$. A calculation shows that  $ \gamma \in \Gamma_{\infty} \backslash \Gamma_0^2(N^2) $. 
	Further, if $ q \nmid N $ then $ \Phi_q((1,\gamma),s) = \Phi_q(1,s) = \phi_q $, as  $ \Phi_q $ is unramified and $ (1,\gamma) \in \GSp(4, \ZZ_q ) $.  Also, for $ p|N $, we get from the definition of $ \Phi_p $ that, $ \Phi_p((1,\gamma),s) = \Phi_p(1,s) = \phi_p  $. Therefore, the above discussion allows us to write
	\begin{align*}
	I(T_1)  & = \sum_{\substack{  \gamma  = \mat{a}{b}{c}{d} \in  \Gamma_{\infty} \backslash \Gamma_0^2(N^2) }}\; \;  
\pi(g_1)	\left( \Phi_{\infty} ((1,\gamma g_{2,\infty}),s)  \bigotimes_{p < \infty}  \phi_p \right)(1). \\
	\end{align*}
	At this stage a simple calculation along the line of Theorem \ref{thm:main_pullback}, whose details we omit, shows that
	\begin{align*}
		I(T_1)	& = (y_1 y_2)^{\frac{s+2}{2}} {E_1}(s,k,N^2,\tau_2) \,  f(\tau_1). 
	\end{align*}
	\underline{Contribution of $ T_{s_1} $:} A calculation, similar to the previous one, shows that the contribution of  $ T_{s_1} $ term is given by
	\begin{align*}
	I(T_{s_1})	& = (y_1 y_2)^{\frac{s+2}{2}} E_1(s,k,N^2,\tau_1) \,  f(\tau_2). 
	\end{align*}
	\underline{Contribution of $ T_{r} $:} Now we find the contribution of $ T_{r} $.
	\begin{align*}
	& I(T_r) = \sum_{\substack{ \gamma_1, \gamma_2 \in \Gamma_{\infty} \backslash \SL(2,\ZZ ) \\  c \in \NN, d \in \ZZ\backslash \{0\} \\ (c,d) =1}} (\Phi (\epsilon_{c,d}  (\gamma_1 g_1,\gamma_2 g_2),s))(1) \\
		& = \sum_{\substack{  \substack{ \gamma_1, \gamma_2 \in \Gamma_{\infty} \backslash \SL(2,\ZZ ) \\  c \in \NN, d \in \ZZ\backslash \{0\} \\ (c,d) =1}}}\; 
	\left( \Phi_{\infty} (\epsilon_{c,d} (\gamma_1 g_{1,\infty},\gamma_2 g_{2,\infty}),s)  \bigotimes_{p | N}  \Phi_p((\gamma_1,\gamma_2),s) \bigotimes_{q < \infty, \, q \nmid   N} \Phi_q((\gamma_1,\gamma_2),s)\right)(1).
	\end{align*}
	Similar to the calculation for the contribution of $ T_1 $ case, only those $ (\gamma_1,\gamma_2) $ will contribute in the summation for which 
	$$ (\gamma_1,\gamma_2) \in \Q(\QQ) \backslash   \bigcap_{p | N}  Q(\QQ) \,L_{p^{n_p}} \Kfun({p^{2n_p}}) = \Q(\QQ) \backslash   Q(\QQ) \,L_N\, \Kfun(N^2) = L_N \, D(N).$$
	This implies, on performing a simple calculation that $ \gamma_1 \in \Gamma_{\infty}  \backslash \SL(2,\ZZ)$ and $\gamma_2 \in \Gamma_{\infty}  \backslash \Gamma_0^2(N^2) $.
At this point, a calculation exactly similar to Theorem \ref{thm:main_pullback} shows that
	\begin{align*}
	I(T_r)  & = \sum_{\substack{c \in \NN, d \in \ZZ\backslash \{0\}, \\ (c,d)= 1}}  \sum_{\substack{\gamma_1 \in  \Gamma_{\infty} \backslash \SL(2,\ZZ), \\ \gamma_2 \in  \Gamma_{\infty} \backslash \Gamma_0^2(N^2)}}  j(\gamma_1,\tau_1)^{-(s+2)} j(\gamma_2,\tau_2)^{-(s+2)}  f(d^2 \gamma_1 \langle \tau_1 \rangle + c^2 \gamma_2  \langle \tau_2 \rangle).
	\end{align*}
	This completes the proof.
\end{proof}

\section{Some applications}\label{Sect:applications}

Now, we give some applications of the pullback formula for the classical Klingen Eisenstein series that we described earlier. The key idea is to compare the Fourier coefficients obtained from the Fourier series expansion of the Klingen Eisenstein series in degree two given by Shin-ichiro Mizumoto in \cite{MR636883} and \cite{MR733590}, with the Fourier coefficients  obtained by using the pullback formula derived earlier (also see Remark~$ \ref{remark:cor} $). Here we will only consider the full level case, i.e.,$ N=1 $, but the method obviously works for any general level $ N $ if formulas for  Fourier series expansion of the Klingen Eisenstein series with level with respect to Siegel and paramodular congruence subgroups are known. 
We also note that 
for $s=k-2$, and $ N=1 $ the Eisenstein series defined by $\eqref{def:Eisensteinseriesmodular}$ reduces to a classical homomorphic Eisenstein series, i.e.,
\begin{align}
E_1(k-2,k,z,1) =   E_k(z) =  \sum_{\substack{    \mat{a}{b}{c}{d} \in \Gamma_{\infty} \backslash \SL(2,\ZZ)    }}\; \; \frac{1}{(c \tau +d)^k},
\end{align}   
where $ E_k(z)  $ is the normalized weight $ k $ holomorphic Eisenstein series with the following Fourier expansion
\begin{align} \label{def:holomorphic_normalized_Eisenstein_series}
E_k(z) = 1 + \frac{2}{\zeta(1-k)} \sum_{m \geq 1} \sigma_{k-1}(m) q^m.
\end{align}
For non-negative numbers $ n_1,n_2 $, let
\begin{align}
\Lambda (n_1,n_2) := \left\{ \left(\begin{smallmatrix} n_1& b/2 \\ b/2 &n_2 \end{smallmatrix} \right)  \vert \, b \in \ZZ, 4n_1 n_2 - b^2 \geq 0 \right\}.
\end{align}
Then, for $ T = \mat{n_1}{\frac{b}{2}}{\frac{b}{2}}{n_2} $ and $ Z = \mat{\tau_1}{}{}{\tau_2} $ we have 
\begin{align} \label{eq:fourier_expansion_Klingen}
E_{2,1}^{k}(Z,f)  = \sum_{\substack{T \in \Lambda (n_1,n_2) \\ n_1, n_2 \geq 0} } A(T,f) q_1^{n_1} q_2^{n_2} ,
\end{align}
where we write $ q_1 = e^{2 \pi i \tau_1} $ and $ q_2 = e^{2 \pi i \tau_2} $.

Further, for $T$ such that $-\det(2T)$ is a fundamental discriminant, $ A(T,f) $, is given by Theorem 1 in \cite{MR636883} as follows
\begin{equation} \label{Eq:Fourier_fundamental_discr}
A(T,f) = (-1)^{\frac{k}{2}} \frac{(k-1)!}{(2k-2)!}(2 \pi)^{k-1} \det(2T)^{k- \frac{3}{2}} \frac{L(k-1, \chi_{-\det(2T)}) L(k-1,f \otimes \vartheta_T) }{L(2k-2,\Sym^2 f)}
\end{equation}
with $ \chi_{-\det(2T)} $ being the Dirichlet character associated to the field $\mathbb{Q}(\sqrt{-\det(2T)})$. 

Now we give a proof of \Cref{thm:main_identity}.
\mainidentity*
\begin{proof}
	The theorem will follow by comparing the $ q_1 q_2 $ coefficients of Fourier expansions on both sides of $ \eqref{Eq:main_pullback_formula}$.
	Let $ f(z)  $ be a normalized elliptic cuspform with the following Fourier expansion
	\begin{align*}
	f(z) = \sum_{n=1}^{\infty} \, a(n) \, q^n.
	\end{align*}
	Here $ f $ is normalized means $ a(1) =1 $.
	For comparing the $ q_1 q_2 $ coefficients using the Fourier expansion of $ E_{2,1}^{k}(Z) $  given by $ \eqref{eq:fourier_expansion_Klingen} $,
	 we must have $ n_1 = n_2 =1 $. Then, since $n_1 = n_2 =1$, the only possible values for $b$ are $0, \pm 1, \pm 2$. Therefore, the $q_1 q_2$-coefficient of $E_{2,1}^{k}(Z)$ is given by
	\begin{equation*}
	\sum_{b=-2}^{2} A \left( \mat{1}{b/2}{b/2}{1},f \right).
	\end{equation*} 
	Since $ \lim\limits_{y \to \infty}(E_{2,1}^k(\mat{\tau}{}{}{iy} ; f)) = f(\tau) $  it follows that 
	\begin{align*}
	A \left( \mat{1}{0}{0}{0}, f \right) = 1.
	\end{align*}
    Moreover, it can easily be verified that 
	$ \mat{1}{1}{1}{1} $ and $ \mat{1}{-1}{-1}{1} $ are both unimodularly equivalent to $ \mat{1}{0}{0}{0} $, therefore,
	\begin{equation*} 
	A \left( \mat{1}{1}{1}{1}, f \right) = A \left( \mat{1}{-1}{-1}{1}, f \right) =  1.
	\end{equation*}
    For each of the remaining three values of $T$, $-\det(2T)$ is a fundamental discriminant. Therefore, from $ \eqref{Eq:Fourier_fundamental_discr} $  we obtain the following results.
	\begin{equation*}
	A \left(\mat{1}{}{}{1}, f \right) = (-1)^{\frac{k}{2}} \frac{(k-1)!}{(2k-2)!}(2 \pi)^{k-1} 2^{2k- 3} \frac{L(k-1, \chi_{-4}) L(k-1,f \otimes \vartheta_1) }{L(2k-2,\Sym^2 f)},
	\end{equation*}
    and
	\begin{align*}
	A \left(\mat{1}{-\frac{1}{2}}{-\frac{1}{2}}{1}, f \right) &= A \left( \mat{1}{\frac{1}{2}}{\frac{1}{2}}{1}, f \right) \\
	&= (-1)^{\frac{k}{2}} \frac{(k-1)!}{(2k-2)!}(2 \pi)^{k-1}  3^{k- \frac{3}{2}} \frac{L(k-1, \chi_{-3}) L(k-1,f \otimes \vartheta_2)}{L(2k-2,\Sym^2 f)}.
	\end{align*}
	Therefore, we get that 
	\begin{align} \label{Eq:right}
	&\sum_{b=-2}^{2} A \left( \mat{1}{b/2}{b/2}{1},f \right) =  \nonumber \\&  2 +\frac{(-1)^{k/2}(k-1)!(2\pi)^{k-1}}{(2k-2)!L(2k-2,\Sym^2 f)}  \left[2^{2k-3}L(k-1,\chi_{-4})L(k-1,f \otimes \vartheta_1) \right.  \nonumber \\ &\left.  + 2 \cdot 3^{k-3/2}L(k-1,\chi_{-3})L(k-1,f \otimes \vartheta_2)\right]. 
	\end{align}
	Now since $ a(1) =1 $, and from $ \eqref{def:holomorphic_normalized_Eisenstein_series} $ we see that the coefficient of $ q $ in the Fourier-series expansion of $ E_k(z) $ is $ \frac{2}{\zeta(1-k)} $, it follows that the $ q_1q_2 $-coefficient in the Fourier-series expansion of 
	$ E_k (\tau_1) f(\tau_2) + E_k (\tau_2) f(\tau_1 ) $ is given by 
	\begin{align}\label{Eq:left}
	\frac{4}{\zeta(1-k)}.
	\end{align}
The $ q_1q_2 $-coefficient in the Fourier-series expansion of the remaining term on the right side is given by (see $ \eqref{Eq:defA_f} $)
\begin{align} \label{Eq:left2}
 A_f(1,1).
\end{align}
Now the theorem follows from $ \eqref{Eq:right} $, $ \eqref{Eq:left} $ and $ \eqref{Eq:left2} $.
\end{proof}
\begin{remark}\leavevmode
	\begin{enumerate}
		\item 	A similar expression as given in the right side of Eq~$\eqref{eq:theorem1.1}$, appears in the weighted average formula for critical $ L $-values given in Theorem $ 1.1 $ in \cite{amersi2012pullbacks}. However, the correct definition of  
		$ \mc{A}_{k}(f) $ in Theorem $ 1.1 $, \cite{amersi2012pullbacks}, should be 
		\begin{align} \label{Eq:Akf_right}
		\mc{A}_{k}(f) &=  \zeta(k-1) \biggl( 2 +\frac{(-1)^{k/2}(k-1)!(2\pi)^{k-1}}{(2k-2)!L(2k-2,\Sym^2 f)}  \left[2^{2k-3}L(k-1,\chi_{-4})\right. \nonumber \\ & \left. L(k-1,f \otimes \vartheta_1) \right. \biggr.  \biggl. \left.  + 2 \cdot 3^{k-3/2}L(k-1,\chi_{-3})L(k-1,f \otimes \vartheta_2)\right] \biggr). 
		\end{align}
		\item Clearly $ \mc{A}_{k}(f) $ as defined in $ \eqref{Eq:Akf_right} $ reduces to $ \zeta(k-1) (\frac{4 }{\zeta(1-k)} + A_f(1,1)) $ by Theorem~$ \ref{thm:main_identity} $. It can be used to further simplify the result in Theorem $ 1.1 $, \cite{amersi2012pullbacks}.	
	\end{enumerate}	
\end{remark}
Finally, we give the proof of \Cref{thm:main_theorem}.
\begin{proof}[\textbf{Proof of \Cref{thm:main_theorem}}]
Since $ \gcd(n_1,n_2) =1 $, Theorem~$ 1 $ in \cite{MR733590} is applicable, and therefore, the result follows on comparing the $ q_1^{n_1} q_2^{n_2} $ coefficients of Fourier-series expansions on both sides of $ \eqref{eq:fourier_expansion_Klingen}$, and thereafter doing some simple algebraic manipulations.
\end{proof}

\bibliographystyle{plain}

\begin{thebibliography}{10}
	
	\bibitem{amersi2012pullbacks}
	Nadine Amersi, Jeffrey Beyerl, Jim Brown, Allison Proffer, and Larry Rolen.
	\newblock Pullbacks of {S}iegel {E}isenstein series and weighted averages of
	critical {L}-values.
	\newblock {\em The Ramanujan Journal}, 27(2):151--162, 2012.
	
	\bibitem{MR575206}
	A.~A. Be\u{\i}linson.
	\newblock Higher regulators and values of {$L$}-functions of curves.
	\newblock {\em Funktsional. Anal. i Prilozhen.}, 14(2):46--47, 1980.
	
	\bibitem{MR1086888}
	Spencer Bloch and Kazuya Kato.
	\newblock {$L$}-functions and {T}amagawa numbers of motives.
	\newblock In {\em The {G}rothendieck {F}estschrift, {V}ol. {I}}, volume~86 of
	{\em Progr. Math.}, pages 333--400. Birkh\"{a}user Boston, Boston, MA, 1990.
	
	\bibitem{Bocherer1982}
	Siegfried B\"ocherer.
	\newblock \"{U}ber gewisse {S}iegelsche {M}odulformen zweiten {G}rades.
	\newblock {\em Math. Ann.}, 261(1):23--41, 1982.
	
	\bibitem{Bocherer1983}
	Siegfried B\"ocherer.
	\newblock {\"U}ber die {F}ourier-{J}acobi-{E}ntwicklung {S}iegelscher
	{E}isensteinreihen.
	\newblock {\em Math. Z.}, 183(1):21--46, 1983.
	
	\bibitem{MR1166221}
	Siegfried B\"{o}cherer, Takakazu Satoh, and Tadashi Yamazaki.
	\newblock On the pullback of a differential operator and its application to
	vector valued {E}isenstein series.
	\newblock {\em Comment. Math. Univ. St. Paul.}, 41(1):1--22, 1992.
	
	\bibitem{MR2806519}
	Jim Brown.
	\newblock On the cuspidality of pullbacks of {S}iegel {E}isenstein series and
	applications to the {B}loch-{K}ato conjecture.
	\newblock {\em Int. Math. Res. Not. IMRN}, (7):1706--1756, 2011.
	
	\bibitem{cohen1975sums}
	Henri Cohen.
	\newblock Sums involving the values at negative integers of {L}-functions of
	quadratic characters.
	\newblock {\em Mathematische Annalen}, 217(3):271--285, 1975.
	
	\bibitem{garrett1984pullbacks}
	Paul~B Garrett.
	\newblock Pullbacks of {E}isenstein series; applications, {A}utomorphic {F}orms
	of {S}everal {V}ariables.
	\newblock {\em Progress in Math.}, 46:114--137, 1984.
	
	\bibitem{garrett1987decomposition}
	Paul~B Garrett.
	\newblock Decomposition of eisenstein series: Rankin triple products.
	\newblock {\em Annals of Mathematics}, 125(2):209--235, 1987.
	
	\bibitem{heim1999pullbacks}
	Bernhard~E Heim.
	\newblock Pullbacks of eisenstein series, hecke-jacobi theory and automorphic
	l-functions.
	\newblock In {\em Proceedings of Symposia in Pure Mathematics}, volume~66,
	pages 201--238. Providence, RI; American Mathematical Society; 1998, 1999.
	
	\bibitem{MR1087121}
	Y.~Kitaoka.
	\newblock A note on {K}lingen's {E}isenstein series.
	\newblock {\em Abh. Math. Sem. Univ. Hamburg}, 60:95--114, 1990.
	
	\bibitem{MR0219473}
	Helmut Klingen.
	\newblock Zum {D}arstellungssatz f\"ur {S}iegelsche {M}odulformen.
	\newblock {\em Math. Z.}, 102:30--43, 1967.
	
	\bibitem{MR2380319}
	Noritomo Kozima.
	\newblock Garrett's pullback formula for vector valued {S}iegel modular forms.
	\newblock {\em J. Number Theory}, 128(2):235--250, 2008.
	
	\bibitem{MR605300}
	Nobushige Kurokawa and Shin-ichiro Mizumoto.
	\newblock On {E}isenstein series of degree two.
	\newblock {\em Proc. Japan Acad. Ser. A Math. Sci.}, 57(2):134--139, 1981.
	
	\bibitem{MR636883}
	Shin-ichiro Mizumoto.
	\newblock Fourier coefficients of generalized {E}isenstein series of degree
	two. {I}.
	\newblock {\em Invent. Math.}, 65(1):115--135, 1981/82.
	
	\bibitem{MR733590}
	Shin-ichiro Mizumoto.
	\newblock Fourier coefficients of generalized {E}isenstein series of degree
	two. {II}.
	\newblock {\em Kodai Math. J.}, 7(1):86--110, 1984.
	
	\bibitem{MR2587308}
	Goran Mui{\'c}.
	\newblock Intertwining operators and composition series of generalized and
	degenerate principal series for ${\Sp}(4,{\RR})$.
	\newblock {\em Glas. Mat. Ser. III}, 44(64)(2):349--399, 2009.
	
	\bibitem{MR920329}
	Shinji Niwa.
	\newblock The inner product of an automorphic wave form with the pullback of an
	{E}isenstein series.
	\newblock {\em Nagoya Math. J.}, 108:93--119, 1987.
	
	\bibitem{MR2652263}
	Abhishek Saha.
	\newblock Pullbacks of {E}isenstein series from {$\rm GU(3,3)$} and critical
	{$L$}-values for {$\rm GSp(4)\times GL(2)$}.
	\newblock {\em Pacific J. Math.}, 246(2):435--486, 2010.
	
	\bibitem{schmidt2018klingen}
	Ralf Schmidt and Alok Shukla.
	\newblock On {K}lingen {E}isenstein series with level in degree two.
	\newblock {\em J. Ramanujan Math. Soc.}, 34(4):373--388, 2019.
	
	\bibitem{MR944989}
	Peter Schneider.
	\newblock Introduction to the {B}e\u{\i}linson conjectures.
	\newblock In {\em Be\u{\i}linson's conjectures on special values of
		{$L$}-functions}, volume~4 of {\em Perspect. Math.}, pages 1--35. Academic
	Press, Boston, MA, 1988.
	
	\bibitem{shukla2017codimensions}
	Alok Shukla.
	\newblock Codimensions of the spaces of cusp forms for siegel congruence
	subgroups in degree two.
	\newblock {\em Pacific Journal of Mathematics}, 293(1):207--244, 2017.
	
	\bibitem{shukla2018klingen}
	Alok Shukla.
	\newblock {\em On Klingen Eisenstein series with levels}.
	\newblock PhD thesis, Department of Mathematics, University of Oklahoma,
	Norman, USA, 2018.
	\newblock
	\url{https://shareok.org/bitstream/handle/11244/299326/2018_Shukla_Alok_Dissertation.pdf}.
	
	\bibitem{MR0230727}
	P.~Swinnerton-Dyer.
	\newblock The conjectures of {B}irch and {S}winnerton-{D}yer, and of {T}ate.
	\newblock In {\em Proc. {C}onf. {L}ocal {F}ields ({D}riebergen, 1966)}, pages
	132--157. Springer, Berlin, 1967.
	
\end{thebibliography}

\end{document}